\documentclass[a4paper,leqno,10pt]{amsart}

\raggedbottom
\hfuzz3pt
\usepackage{epsf,graphicx,epsfig}
\usepackage{amscd}
\usepackage{amsmath,latexsym,amssymb,amsthm}
\usepackage[nospace,noadjust]{cite}
\usepackage{textcomp}
\usepackage{setspace,cite}
\usepackage{lscape,fancyhdr,fancybox}
\usepackage{textcomp}
\usepackage{bm}
\usepackage{stmaryrd}
\usepackage[all,cmtip]{xy}
\usepackage{tikz}
\usetikzlibrary{shapes,arrows,decorations.markings}
\setlength{\unitlength}{0.4in}

\usepackage{graphicx}

\usepackage{color}
\usepackage{url}
\usepackage{enumerate}
\usepackage[mathscr]{euscript}

\setlength{\topmargin}{.001mm}
\setlength{\textheight}{9.1in}
\setlength{\oddsidemargin}{.1in}
\setlength{\evensidemargin}{.1in}
\setlength{\textwidth}{6.5in}
\setlength{\textwidth}{6.0in}

  \theoremstyle{plain}

\swapnumbers
    \newtheorem{thm}{Theorem}[section]
    \newtheorem{prop}[thm]{Proposition}
   \newtheorem{lemma}[thm]{Lemma}

    \newtheorem{subsec}[thm]{}

\theoremstyle{definition}
    \newtheorem{defn}[thm]{Definition}
    \newtheorem{remark}[thm]{Remark}
    \newtheorem{exam}[thm]{Example}

\theoremstyle{remark}

\title{}
\author{}
\date{}
\usepackage{amssymb}

\usepackage{hyperref}
\hypersetup{
	colorlinks,
	citecolor=blue,
	filecolor=black,
	linkcolor=blue,
	urlcolor=black
}

\title{Cohomology and deformations of dendriform algebras, and $\mathrm{Dend}_\infty$-algebras}

\author{Apurba Das}
\address{Department of Mathematics and Statistics,
Indian Institute of Technology, Kanpur 208016, Uttar Pradesh, India.}
\email{apurbadas348@gmail.com}

\subjclass[2010]{17A30, 16E40, 18G55}
\keywords{Dendriform algebras, Deformations, $\mathrm{Dend}_\infty$-algebras, Rota-Baxter operators}

\begin{document}

\maketitle
\begin{abstract}
A dendriform algebra is an associative algebra whose product splits into two binary operations and the associativity splits into three new identities. These algebras arise naturally from some combinatorial objects and through Rota-Baxter operators. In this paper, we start by defining cohomology of dendriform algebras with coefficient in a representation. The deformation of a dendriform algebra $A$ is governed by the cohomology of $A$ with coefficient in itself.
Next we study $\mathrm{Dend}_\infty$-algebras (dendriform algebras up to homotopy) in which the dendriform identities hold up to certain homotopy.
They are certain splitting of $A_\infty$-algebras. We define Rota-Baxter operator on $A_\infty$-algebras which naturally gives rise to $\mathrm{Dend}_\infty$-algebras.
Finally, we classify skeletal and strict $\mathrm{Dend}_\infty$-algebras.
\end{abstract}

\maketitle


\section{Introduction}

In \cite{loday} Loday introduced a notion of diassociative algebra motivated from his study on periodicity phenomenons in algebraic $K$-theory. The Koszul dual (in the sense of Ginzburg and Kapranov \cite{ginz-kap}) of the operad of diassociative algebra is given by the operad of dendriform algebras. More precisely, a dendriform algebra is a vector space together with two binary operations satisfying three new identities. It turns out that the sum of the two binary operations is associative. Thus, a dendriform algebra can be thought as a splitting of an associative algebra. The notion of free dendriform algebra over a vector space was defined by Loday using planar binary trees. He also defined homology and cohomology theory of dendriform algebras with trivial coefficients \cite{loday}.

Rota-Baxter operator on associative algebras was introduced by G. Baxter in his study of probability theorey and was further studied by G.-C. Rota and his school \cite{baxter, rota}. The connection between Rota-Baxter operator and dendriform algebras was first observed by Aguiar who showed that a Rota-Baxter operator of weight zero carries a dendriform structure \cite{aguiar}. The adjoint functor of this construction was given by Ebrahimi-Farb and Guo by considering free Rota-Baxter algebras \cite{farb-guo}. See \cite{farb-man-pat, guba-kol, pei-bai-guo} for some other developments of dendriform algebras.

Despite the study of dendriform algebras, its connection with Rota-Baxter operator and various other algebras, the cohomology theory of dendriform algebras with coefficients and deformation theory has not yet properly studied. Some aspects of the deformation of dendriform algebras has been mentioned in \cite{leroux}, however, no cohomological interpretation has been obtained. On the other hand, while, strongly homotopy associative algebras ($A_\infty$-algebras) pay much attention from long years, (the 
split object)
strongly homotopy version of dendriform algebras has not yet studied. However, a definition of a strongly homotopy dendriform algebra has been mentioned in \cite{loday-val} from their study on the Koszul duality of operads. Our aim in this paper is to fulfill these gaps. We divide the results of the present paper into two parts which can be described as follows.

\medskip            

\subsection{Cohomology and deformations of dendriform algebras}
(Contents of section 2, 3)
We introduce a cohomology theory for dendriform algebras with coefficient in a representation. This cohomology theory is better understood in terms of certain combinatorial maps defined on the sets $\{ C_n | ~ n \geq 1 \}$, where $C_n$ is the set of first $n$ natural numbers. We show that the underlying cochain complex of a dendriform algebra with coefficient in itself carries a structure of a non-symmetric operad with a multiplication (Proposition \ref{dend-operad}, Remark \ref{dend-mul}). As a consequence, we get a Gerstenhaber structure on the cohomology with coefficients in itself (Remark \ref{gers-rem}). Some attempts in this direction has been done in \cite{yau}, however, there is some inaccuracy in the proof. 
We also relate the cohomology of a dendriform algebra with the Hochschild cohomology of the corresponding associative algebra (Theorem \ref{dend-hoch}). It is also shown that the second cohomology group can be represented by the equivalence classes of abelian extensions (Theorem \ref{2-coho-abel}).

Next, we study $1$-parameter formal deformation of dendriform algebras along the line of Gerstenhaber \cite{gers2}. The vanishing of the second cohomology of a dendriform algebra (with coefficient in itself) implies the rigidity of the dendriform structure. We also gave a universal deformation formula for dendriform algebras which can be thought as a splitting of the universal deformation formula for associative algebras constructed in \cite{gers18}. Finally, a deformation of finite order extends to a deformation of next order if the the third cohomology group vanishes (Theorem \ref{third-extension}).

\medskip

\subsection{Strongly homotopy dendriform algebras}
(Contents of section 4, 5, 6)
The notion of $A_\infty$-space first arise in the work of Stasheff in the recognition of loop spaces \cite{stas}. The algebraic analogue of an $A_\infty$-space is called an $A_\infty$-algebra. Later on, $L_\infty$-algebras (strongly homotopy Lie algebras) was appeared in a supporting role in deformation theory \cite{lada-markl, lada-stas}. Various type of homotopy algebras are particular interest of current research.

In this paper, we concern about a strongly  homotopy version of dendriform algebras. More precisely, we introduce a notion of $\mathrm{Dend}_\infty$-algebra on a graded vector space $A$ in which the dendriform identities hold up to certain homotopy (Definition \ref{defn-dend-infty}). This notion also involves the combinatorial maps defined on $\{ C_n |~ n \geq 1 \}$. Our definition is equivalent to the one defined in \cite{loday-val}. We also define a notion of $\mathrm{Dend}_\infty[1]$-algebra which is equivalent to $\mathrm{Dend}_\infty$-algebra by a degree shift. Using this equivalent notion, we interpret a $\mathrm{Dend}_\infty$-algebra structure on $A$ as a square zero, degree $-1$ coderivation on the free diassociative coalgebra $\text{Diass}^{c}(V)$ on $V= sA$ (Theorem \ref{thm-dend-coder}). Thus, we obtain the following equivalent descriptions

\medskip

\begin{center}
$\mathrm{Dend}_\infty$-algebra on $A$ $\leftrightsquigarrow$ $\mathrm{Dend}_\infty[1]$-algebra on $V= sA$ $\leftrightsquigarrow$ $D \in \text{Coder}^{-1} (\text{Diass}^{c}(V))$ with $D^2 = 0$.
\end{center}

\medskip

\noindent A $\mathrm{Dend}_\infty$-algebra can be thought as splitting of an $A_\infty$-algebra whose $k$-ary operation splits into $k$ many operation and $k$-th identity splits into $k$ many new identities (Theorem \ref{split-a-inf}). 
We also define Rota-Baxter operator on $A_\infty$-algebras and show that they naturally give rise to  $\mathrm{Dend}_\infty$-algebras (Theorem \ref{rota-inf}). This generalizes the result of Aguiar to the homotopy context. Finally, following the result of Baez and Crans for $L_\infty$-algebras \cite{baez-crans}, we classify skeletal and strict $2$-term $\text{Dend}_\infty$-algebras (Theorems \ref{skeletal-2}, \ref{skeletal-n}, \ref{strict-cross}). More precisely, skeletal algebras correspond to third cohomology of dendriform algebras and strict algebras correspond to crossed module of dendriform algebras.

\medskip

Finally, in section 7 (Appendix), we recall some basics on operad with a multiplication, and $A_\infty$-algebras. All vector spaces, linear maps are over a field $\mathbb{K}$ of characteristic $0$.

\section{Cohomology of dendriform algebras}\label{sec-2}
Our aim in this section is to introduce and study cohomology of dendriform algebras with coefficient in a representation.

\begin{defn} \cite{loday}
A dendriform algebra is a vector space $A$ together with two bilinear maps $\prec, \succ : A \otimes A \rightarrow A$ satisfying
\begin{align}
& (a \prec b) \prec c = a \prec (b \prec c + b \succ c), \label{dend-eqn-1}\\
& (a \succ b) \prec c =  a \succ (b \prec c), \label{dend-eqn-2}\\
& (a \prec b + a \succ b) \succ c = a \succ (b \succ c), ~~~ \text{ for all } a, b , c \in A. \label{dend-eqn-3}
\end{align}
\end{defn}

A morphism between two dendriform algebras is a linear map which preserves the corresponding binary products.

Given a dendriform algebra $(A, \prec, \succ)$ one may define a new binary operation $\star$ on $A$ by
\begin{align*}
a \star b = a \prec b + a \succ b.
\end{align*}
Then it follows from (\ref{dend-eqn-1})-(\ref{dend-eqn-3}) that the product $\star$ is associative. Thus, a dendriform algebra can be thought as an associative algebra whose binary operation splits into two operations and whose associativity splits into three new identities.

Dendriform algebras arise naturally from Rota-Baxter operator on associative algebras. Let $(A, m )$ be an associative algebra and $R$ be a Rota-Baxter operator (of weight zero). In other words, $R : A \rightarrow A$ is a linear map satisfying 
\begin{align*}
m(R(a) , R(b)) = R \big( m(a , R(b)) + m(R(a) , b) \big), ~~~ \text{ for all } a, b \in A.
\end{align*}
In \cite{aguiar}, Aguiar showed that if $R$ is a Rota-Baxter operator on $(A, m)$, then the two multiplications $\prec$ and $\succ$ defined on $A$ by
\begin{align*}
a \prec b = m(a,  R (b)) ~~~~~~~~ \mathrm{ and } ~~~~~~~~ a \succ b = m(R(a), b)
\end{align*}
defines a dendriform structure on $A$. This result was extended by Ebrahimi-Farb who relates Rota-Baxter operator of arbitrary weight with dendriform trialgebra, a variant of dendrifom algebra introduced by Loday and Ronco \cite{farb, loday-ronco}.

The cohomology of a dendriform algebra with coefficients in $\mathbb{K}$ was introduced by Loday \cite{loday}. However, cohomology with coefficients in a representation is not yet defined. To do that, we need to introduce certain combinatorial maps which will be the key-point for the rest of the paper.

\medskip

Let $C_n = \{1, 2, \ldots, n \}$ be the set of first $n$ natural numbers. Since we will treat the elements of $C_n$ as certain symbols, we denote them by $\{ [1], [2], \ldots, [n] \}.$

For any $m , n \geq 1$ and $1 \leq i \leq m,$ we define maps $R_0 (m; \overbrace{1, \ldots, 1, \underbrace{n}_{i\text{-th place}}, 1, \ldots, 1}^{m}) : C_{m+n-1} \rightarrow C_m$ by

\begin{align*} R_0 (m; 1, \ldots, 1, n, 1, \ldots, 1) ([r]) ~=~
\begin{cases} [r] ~~~ &\text{ if } ~~ r \leq i-1 \\ [i] ~~~ &\text{ if } i \leq r \leq i +n -1 \\
[r -n + 1] ~~~ &\text{ if } i +n \leq r \leq m+n -1. \end{cases}
\end{align*}
We also define maps $R_i (m; \overbrace{1, \ldots, 1, \underbrace{n}_{i\text{-th place}}, 1, \ldots, 1}^{m}) : C_{m+n-1} \rightarrow \mathbb{K}[C_n]$ by

\begin{align*} R_i (m; 1, \ldots, 1, n, 1, \ldots, 1) ([r]) ~=~
\begin{cases} [1] + [2] + \cdots + [n] ~~~ &\text{ if } ~~ r \leq i-1 \\ [r - (i-1)] ~~~ &\text{ if } i \leq r \leq i +n -1 \\
[1]+ [2] + \cdots + [n] ~~~ &\text{ if } i +n \leq r \leq m+n -1. \end{cases}
\end{align*}

Let $A$ be any vector space. Consider the vector space
\begin{align*}
\mathcal{O}(n) := \mathrm{Hom}_{\mathbb{K}} (\mathbb{K}[C_n] \otimes A^{\otimes n}, A), ~~~~ \text{ for } n \geq 1.
\end{align*}
For $f \in \mathcal{O}(m), ~g \in \mathcal{O}(n)$ and $1 \leq i \leq m$, we define
$f \circ_i g \in \mathcal{O}(m+n-1)$ by
\begin{align*}
&(f \circ_i g) ([r]; a_1, \ldots, a_{m+n-1}) \\
&= f (R_0 (m; 1, \ldots, n, \ldots, 1)[r]; a_1, \ldots, a_{i-1}, g (R_i (m; 1, \ldots, n, \ldots, 1)[r]; a_i, \ldots, a_{i+n-1}), a_{i+n}, \ldots, a_{m+n-1})
\end{align*}
for $[r] \in C_{m+n-1}$ and $a_1, \ldots, a_{m+n-1} \in A$. Then we have the following.

\begin{prop}\label{dend-operad}
The partial compositions $\circ_i$ defines a non-$\sum$ operad structure on $\{ \mathcal{O}(n)|~ n \geq 1 \}$ with the identity element $\mathrm{id} \in \mathcal{O}(1)$ given by
\begin{align*}
\mathrm{id} ([1]; a) = a, ~~~ \mathrm{ for  } [1] \in C_1 ~~~ \mathrm{ and } ~~ a \in A.
\end{align*}
\end{prop}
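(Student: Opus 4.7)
The plan is to verify the three defining axioms of a non-symmetric operad for the collection $\{\mathcal{O}(n)\}_{n \geq 1}$ with partial compositions $\circ_i$ and unit $\mathrm{id}$: the left and right unit axioms $\mathrm{id} \circ_1 g = g$ and $f \circ_i \mathrm{id} = f$, together with the two May-style associativity identities for partial compositions.

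For the unit axioms, I would specialise to $n=1$ (respectively $m=1$, $i=1$). The combinatorial maps then degenerate in the expected way: in $f \circ_i \mathrm{id}$, the map $R_0(m;1,\ldots,1)$ is the identity on $C_m$ while $R_i(m;1,\ldots,1)$ is the constant map to $[1]$, so the defining formula collapses to $f$ after applying $\mathrm{id}([1];a)=a$. Dually, in $\mathrm{id}\circ_1 g$, the map $R_0(1;n)$ is constant at $[1]$ and $R_1(1;n)$ is the identity of $C_n$, giving $g$ on the nose.

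The heart of the proof is the pair of associativity identities. For $f \in \mathcal{O}(m)$, $g \in \mathcal{O}(n)$, $h \in \mathcal{O}(p)$, one must check the sequential axiom
\[
(f \circ_i g) \circ_{i+j-1} h \;=\; f \circ_i (g \circ_j h) \qquad (1 \leq j \leq n),
\]
and the parallel axiom
\[
(f \circ_i g) \circ_{k+n-1} h \;=\; (f \circ_k h) \circ_i g \qquad (1 \leq i < k \leq m).
\]
Expanding both sides against the definition of $\circ_i$ and evaluating on an arbitrary $[r] \in C_{m+n+p-2}$ and $(a_1, \ldots, a_{m+n+p-2})$, the equalities reduce to combinatorial identities among the maps $R_0$ and $R_\bullet$: the iterated $R_0$-projections must coincide as set maps into $C_m$, and the nested $R_\bullet$-maps must produce equal elements of $\mathbb{K}[C_n]$ and $\mathbb{K}[C_p]$ when substituted into $f$, $g$, $h$. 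I would organise the verification by case analysis on where $[r]$ falls; in the sequential case the natural break-points are $i-1,\, i+j-2,\, i+j+p-2,\, i+n+p-2$, and in each range both sides admit an explicit evaluation which visibly agrees. The parallel case is analogous but simpler, because the $g$-block and the $h$-block occupy disjoint intervals inside $C_{m+n+p-2}$, so neither insertion affects the indices controlled by the other.

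The delicate point, and the only place where the precise form of $R_\bullet$ matters, is the sequential axiom in the ranges where $[r]$ lies outside the inner $h$-block. There the $R_\bullet$-maps emit the full formal sum $[1] + [2] + \cdots$, and one must check that after iterated composition these full sums are produced on both sides with matching coefficients—equivalently, that the outer constant-sum branch on one side corresponds exactly to a full-sum $R_\bullet$ followed by the inner $g \circ_j h$ composition on the other. This is the combinatorial content that the piecewise definitions of $R_0$ and $R_i$ have been tuned to encode; once this matching is laid out for the sequential axiom, the remaining cases and the parallel axiom fall out by inspection.
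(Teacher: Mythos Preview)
Your proposal is correct and follows essentially the same approach as the paper: direct verification of the operad axioms by case analysis on the position of $[r]$, with exactly the same break-points $i-1,\, i+j-2,\, i+j+p-2,\, i+n+p-2$ for the sequential axiom. The paper carries out in detail precisely the case you flag as delicate (specifically $r \leq i-1$, where the full sum $[1]+\cdots+[n+p-1]$ must be shown to factor through $g \circ_j h$ correctly), then declares the remaining ranges and the parallel axiom similar, just as you outline.
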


\begin{proof}
Let $f \in \mathcal{O}(m)$, $g \in \mathcal{O}(n)$, $h \in \mathcal{O}(p)$ and $1 \leq i \leq m$, $1 \leq j \leq n$. Then for any $1 \leq r \leq i-1$, we have
\begin{align}\label{some-operad-iden1}
&((f \circ_i g ) \circ_{i+j-1} h ) ([r]; a_1, \ldots, a_{m + n + p - 2}) \nonumber \\
&= (f \circ_i g ) ([r]; a_1, \ldots , h ([1]+ \cdots + [p]; a_{i+j-1}, \ldots, a_{i+j+p-2}), a_{i+j+n-1}, \ldots, a_{m+n+p-2}) \nonumber \\
&= f \big( [r]; a_1, \ldots, g([1]+ \cdots + [n]; a_i, \ldots, h ([1]+ \cdots + [p]; a_{i+j-1}, \ldots, a_{i+j+p-2}), \ldots, a_{i+n+p-2} ),\\
& \hspace*{9cm} \ldots, a_{m+n+p-2} \big).  \nonumber
\end{align}
On the other hand,
\begin{align}\label{some-operad-iden}
&(f \circ_i (g \circ_j h )) ([r]; a_1, \ldots, a_{m+n+p-2}) \nonumber \\
& = f ([r]; a_1, \ldots, (g \circ_j h) ([1]+ \cdots + [n+p-1]; a_i, \ldots, a_{i+n+p-2}), \ldots, a_{m+n+p-2}).
\end{align}
Observe that
\begin{align*}
&(g \circ_j h) ([1]+ \cdots + [n+p-1]; a_i, \ldots, a_{i+n+p-2}) \\
&= (g \circ_j h) ([1]+ \cdots + [j-1];~ a_i, \ldots,  a_{i+n+p-2} )\\
&+ (g \circ_j h) ([j]+ \cdots + [j+p-1];~ a_i, \ldots, a_{i+n+p-2}) \\
&+ (g \circ_j h) ([j+p]+ \cdots + [n+p-1];~  a_i, \ldots, a_{i+n+p-2} ) \\
&= g \big([1]+ \cdots + [j-1] ;~ a_i, \ldots, h ([1]+ \cdots + [p];~ a_{i+j-1}, \ldots, a_{i+j+p-2}), \ldots, a_{i+n+p-2} \big)\\
&+ g \big([j] ;~ a_i, \ldots, h ([1]+ \cdots + [p];~ a_{i+j-1}, \ldots, a_{i+j+p-2}), \ldots, a_{i+n+p-2} \big)\\
&+ g \big([j+1]+ \cdots + [n];~  a_i, \ldots, h ([1]+ \cdots + [p];~ a_{i+j-1}, \ldots, a_{i+j+p-2}), \ldots, a_{i+n+p-2} \big)\\
&= g \big([1]+ \cdots + [n];~ a_i, \ldots, h ([1]+ \cdots + [p];~ a_{i+j-1}, \ldots, a_{i+j+p-2}), \ldots, a_{i+n+p-2} \big).
\end{align*}
Substituting this identity in (\ref{some-operad-iden}) and then comparing with (\ref{some-operad-iden1}) we get that
\begin{align}\label{operad-equa}
((f \circ_i g ) \circ_{i+j-1} h ) ([r]; a_1, \ldots, a_{m + n + p - 2}) = (f \circ_i (g \circ_j h )) ([r]; a_1, \ldots, a_{m+n+p-2}),
\end{align}
for $ r \leq i-1$. Similarly, one can show that (\ref{operad-equa}) holds if $r$ belongs to either of the interval $i \leq r \leq i+j-2$ / $i+j-1 \leq r \leq i+j+p-2$ / $i+j+p-1 \leq r \leq i+n+p-2$ / $i+n+p-1 \leq r \leq m+n+p-2$. Hence we have $((f \circ_i g ) \circ_{i+j-1} h ) = (f \circ_i (g \circ_j h ))$.

In a similar way, one can also show that $(f \circ_i g) \circ_{j+n-1} h = (f \circ_j h) \circ_i g$, for $1 \leq i < j \leq m$.

Finally, it is easy to see that the element $\mathrm{id} \in \mathcal{O}(1)$ is an identity element of the operad. Hence the proof. 
\end{proof}

\begin{remark}\label{dend-mul}
Let $(A, \prec, \succ)$ be a dendriform algebra. Define an element $\pi_A \in \mathcal{O}(2)$ by
$$ \pi_A ([r]; a, b) = \begin{cases}  a \prec b & \mbox{ if }  [r] = [1]\\
a \succ b & \mbox{ if } [r] = [2].
\end{cases} $$
It follows from the dendriform identities that $\pi_A $ satisfies $\pi_A \circ_1 \pi_A = \pi_A \circ_2 \pi_A$, or equivalently, $\pi_A \circ \pi_A = 0$ (see Appendix for the notation). Therefore, $\pi_A$
defines a multiplication in the operad considered in Proposition \ref{dend-operad}.

Thus, a linear map $f : A \rightarrow B$ between two dendriform algebras is a morphism if $\pi_A ([r]; a,  b ) = \pi_B ([r]; f(a), f(b))$, for all $[r] \in C_2$ and $a, b \in A.$
\end{remark}

\medskip

\subsection{Representation}

Let $(A , \prec, \succ)$ be a dendriform algebra. 
\begin{defn}
A representation of $A$ is given by a vector space $M$ together with two left actions 
\begin{align*}
\prec ~: A \otimes M \rightarrow M \qquad \succ ~: A \otimes M \rightarrow M
\end{align*}
and two right actions
\begin{align*}
\prec ~: M \otimes A \rightarrow M \qquad \succ ~: M \otimes A \rightarrow M
\end{align*}
satisfying the following $9$ identities
\begin{align*}
& (a \prec b) \prec m = a \prec (b \prec m + b \succ m),\\
& (a \succ b) \prec m =  a \succ (b \prec m),\\
& (a \prec b + a \succ b) \succ m = a \succ (b \succ m),\\
& \\
& (a \prec m) \prec c = a \prec (m \prec c + m \succ c),\\
& (a \succ m) \prec c =  a \succ (m \prec c),\\
& (a \prec m + a \succ m) \succ c = a \succ (m \succ c), \\
& \\
& (m \prec b) \prec c = m \prec (b \prec c + b \succ c),\\
& (m \succ b) \prec c =  m \succ (b \prec c),\\
& (m \prec b + m \succ b) \succ c = m \succ (b \succ c),
\end{align*}
for all $a, b, c \in A$ and $m \in M$.
\end{defn}

\begin{remark}\label{remark-repn}
To define the representation in a more compact form, we define two maps $\theta_1 : \mathbb{K}[C_2] \otimes (A \otimes M) \rightarrow M$ and $\theta_2 : \mathbb{K}[C_2] \otimes (M \otimes A) \rightarrow M$ by

$$ \theta_1 ([r]; a, m) = \begin{cases} a \prec m ~~~ &\mathrm{ if } ~~ [r]=[1] \\
a \succ m ~~~ &\mathrm{ if } ~~ [r] = [2] \end{cases} \qquad \quad
 \theta_2 ([r];  m, a) = \begin{cases} m \prec a ~~~ &\mathrm{ if } ~~ [r]=[1] \\
m \succ a ~~~ &\mathrm{ if } ~~ [r] = [2]. \end{cases}$$
Then the above $9$ identities of a representation can be expressed as
\begin{align*}
\theta_1 \big(  R_0 (2; 1, 2) [s]; ~ a,~ \theta_1 ( R_2 (2; 1, 2)[s]; b, m)  \big) =~& \theta_1 (R_0 (2; 2, 1)[s] ; ~\pi_A (R_1 (2; 2, 1)[s]; a, b),~ m), \\
\theta_1 \big(  R_0 (2; 1, 2) [s]; ~ a,~ \theta_2 ( R_2 (2; 1, 2)[s]; m, c)  \big) =~& \theta_2 (R_0 (2; 2, 1)[s] ; ~\theta_1 (R_1 (2; 2, 1)[s]; a, m),~ c),\\
\theta_2 \big(  R_0 (2; 1, 2) [s]; ~ m,~ \pi_A ( R_2 (2; 1, 2)[s]; b, c)  \big) =~& \theta_2 (R_0 (2; 2, 1)[s] ; ~\theta_2 (R_1 (2; 2, 1) [s] ; m, b),~ c), 
\end{align*}
for all $[s] \in C_3$ and $a, b, c \in A$, $m \in M$.
\end{remark}

Any dendriform algebra $A$ is a representation of itself with $\theta_1 = \theta_2 = \pi_A$. In such a case, all the above three identities are equivalent to $\pi_A \circ_2 \pi_A = \pi_A \circ_1 \pi_A$, which holds automatically as $\pi_A$ defines a multiplication.

\begin{prop}\label{prop-semi} (Semi-direct product) Let $(A, \prec, \succ)$ be a dendriform algebra and $M$ be a representation of it. Then the direct sum $A \oplus M$ inherits a dendriform algebra structure with the multiplications given by
\begin{align*}
(a, m) \prec (b, n) =~& (a \prec b,~ a \prec n + m \prec b),\\
(a, m) \succ (b, n) =~& (a \succ b,~ a \succ n + m \succ b),
\end{align*}
for $(a,m), (b,n) \in A \oplus M.$
\end{prop}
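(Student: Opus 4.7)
The plan is to verify directly that the two operations defined on $A \oplus M$ satisfy the three dendriform identities \eqref{dend-eqn-1}--\eqref{dend-eqn-3}. For each identity, I expand both sides using the definitions of $\prec$ and $\succ$ on $A \oplus M$, and then compare the $A$-components and the $M$-components separately. The $A$-component of each identity is simply the corresponding dendriform identity for $(A, \prec, \succ)$ and so holds by hypothesis. Thus the whole content of the proof is to check the three $M$-component identities.

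The key observation is that each $M$-component, after expansion, is a sum of exactly three monomials distinguished by which of the three positions holds the vector from $M$. For example, for the first axiom one finds
\[
((a,m) \prec (b,n)) \prec (c,p) - (a,m) \prec \bigl((b,n) \prec (c,p) + (b,n) \succ (c,p)\bigr)
\]
has $M$-component equal to the sum of
\[
(a \prec b) \prec p - a \prec (b \prec p + b \succ p), \quad
(a \prec n) \prec c - a \prec (n \prec c + n \succ c),
\]
\[
(m \prec b) \prec c - m \prec (b \prec c + b \succ c),
\]
and each of these three expressions vanishes by precisely one of the nine representation identities (the first, fourth, and seventh, respectively). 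The analogous decomposition works for the second and third dendriform axioms: the axiom for $\succ \circ \prec$ on $A \oplus M$ reduces to identities 2, 5, 8, and the axiom for $\succ \circ \succ$ reduces to identities 3, 6, 9. This is exactly how the nine identities were organized in blocks of three in the definition of a representation.

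The verification is mechanical and the only possible obstacle is notational bookkeeping. There is no genuine difficulty because the nine axioms were set up precisely to ensure that the three positions $m$, $n$, $p$ in the $M$-components behave consistently with the dendriform structure. One can also phrase this more conceptually using the compact form of Remark \ref{remark-repn}: the maps $\theta_1$ and $\theta_2$, together with $\pi_A$, assemble into a single operation $\pi_{A \oplus M} \in \mathcal{O}(2)$ on $A \oplus M$ whose squared composite vanishes by the three identities of Remark \ref{remark-repn} together with $\pi_A \circ \pi_A = 0$, and this is equivalent to the dendriform axioms on $A \oplus M$ by Remark \ref{dend-mul}. Either formulation completes the proof.
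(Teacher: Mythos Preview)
Your proof is correct. The paper does not actually supply a proof of this proposition; it states the result and then simply records, immediately afterward, the compact description
\[
\pi_{A \oplus M} ([r]; (a,m), (b, n)) = \big( \pi_A ([r]; a, b) ,~ \theta_1 ([r]; a, n) + \theta_2 ([r]; m, b) \big),
\]
which is exactly the conceptual reformulation you sketch at the end. So both your direct verification via the nine representation identities and your operadic rephrasing are appropriate, and the latter aligns precisely with how the paper packages the result.
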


With the notations of Remark \ref{remark-repn}, the multiplication corresponding to the semi-direct product is given by
\begin{align*}
\pi_{A \oplus M} ([r]; (a,m), (b, n)) = \big( \pi_A ([r]; a, b) ,~ \theta_1 ([r]; a, n) + \theta_2 ([r]; m, b) \big), \text{ for } [r] \in C_2.
\end{align*}

\medskip

Let $(A, \prec, \succ)$ be a dendriform algebra and $M$ be a representation of it. We define the group $C^n_{\mathrm{dend}} (A, M)$ of $n$-cochains of $A$ with coefficients in $M$ by
\begin{align*}
C^n_{\mathrm{dend}} (A, M) := \mathrm{Hom}_\mathbb{K} (\mathbb{K}[C_n] \otimes A^{\otimes n}, M), ~~~ \text{ for } n \geq 1. 
\end{align*}
The coboundary is given by
\begin{align*}
&(\delta_{\mathrm{dend}} f) ([r]; a_1 , \ldots, a_{n+1}) \\
&=  \theta_1 \big( R_0 (2;1,n) [r]; ~a_1, f (R_2 (2;1,n)[r]; a_2, \ldots, a_{n+1})   \big) \\
&+ \sum_{i=1}^n  (-1)^i~ f \big(  R_0 (n; 1, \ldots, 2, \ldots, 1)[r]; a_1, \ldots, a_{i-1}, \pi_A (R_i (1, \ldots, 2, \ldots, 1)[r]; a_i, a_{i+1}), a_{i+2}, \ldots, a_{n+1}   \big) \\
&+ (-1)^{n+1} ~\theta_2 \big( R_0 (2; n, 1) [r];~ f (R_1 (2;n,1)[r]; a_1, \ldots, a_n), a_{n+1}   \big),
\end{align*}
for $[r] \in C_{n+1}$ and $a_1, \ldots, a_{n+1} \in A.$

The corresponding cohomology groups are denoted by $H^n_{\text{dend}} (A, M)$, for $n \geq 2$.
When we consider $\mathbb{K}$ as a representation of $A$ with trivial action ($\theta_1 = 0$ and $\theta_2 = 0$), the corresponding coboundary operator coincides with the one defined by Loday \cite{loday}, hence, our cohomology coincides with that of Loday for trivial coefficient.

\begin{remark}\label{gers-rem}
When $M = A$ with the representation given by $\theta_1 = \theta_2 = \pi_A$, the above coboundary map coincides with the one induced from the multiplication $\pi_A$. Therefore, the cohomology inherits a Gerstenhaber algebra structure. This first observation also ensures that $(\delta_{\text{dend}})^2 = 0$ for the coboundary map defined above with coefficients in an arbitrary representation.
\end{remark}

\begin{remark}
Let $(A, \prec, \succ)$ be a dendriform algebra and $M$ be a representation of it. Then $M$ can be considered as a representation of the corresponding associative algebra $(A, \star)$ via
\begin{align*}
a m = a \prec m + a \succ m \quad \text{ and } ~~ \quad ~~ m a = m \prec a + m \succ a.
\end{align*}
\end{remark}

The next result relates the cohomology of a dendriform algebra with the Hochschild cohomology of the corresponding associative algebra.

\begin{thm}\label{dend-hoch}
The map $S : C^n_{\mathrm{dend}} (A, M) \rightarrow C^n_{\mathrm{Hoch}} (A, M)$ defined by 
\begin{align*}
S(f) := f_{[1]} + \cdots + f_{[n]}
\end{align*}
commute with the differentials, i.e. $\delta_{\mathrm{Hoch}} \circ S = S \circ \delta_{\mathrm{dend}}$. Thus, it induces a map $S_* : H^n_{\mathrm{dend}} (A, M) \rightarrow H^n_{\mathrm{Hoch}} (A, M).$ Moreover, when $M = A$, the induced map $S_* : H^n_{\mathrm{dend}} (A, A) \rightarrow H^n_{\mathrm{Hoch}} (A, A)$ is in fact a morphism between Gerstenhaber algebras.
\end{thm}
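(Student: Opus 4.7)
The plan is to verify $\delta_{\mathrm{Hoch}} \circ S = S \circ \delta_{\mathrm{dend}}$ by direct term-by-term computation, exploiting the combinatorial design of the maps $R_0$ and $R_i$: these are engineered so that summing the index $[r]$ over its full range collapses the split operations $\prec$ and $\succ$ into the associative product $\star$. Writing $f_{[i]}(a_1,\ldots,a_n) := f([i]; a_1,\ldots,a_n)$ so that $S(f) = \sum_{i=1}^n f_{[i]}$, I would evaluate $(S \circ \delta_{\mathrm{dend}})(f)(a_1,\ldots,a_{n+1}) = \sum_{[r] \in C_{n+1}} (\delta_{\mathrm{dend}} f)([r]; a_1,\ldots,a_{n+1})$ piece by piece and match against the Hochschild coboundary.

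For the first term, inspection of $R_0(2;1,n)$ and $R_2(2;1,n)$ shows that $r=1$ contributes $a_1 \prec S(f)(a_2,\ldots,a_{n+1})$ (since $R_2$ then outputs $[1]+\cdots+[n]$), while the remaining $r\in\{2,\ldots,n+1\}$ together contribute $a_1 \succ S(f)(a_2,\ldots,a_{n+1})$ after reindexing; their sum is $a_1 \star S(f)(a_2,\ldots,a_{n+1})$. A symmetric analysis of the last term yields $(-1)^{n+1} S(f)(a_1,\ldots,a_n)\star a_{n+1}$. For each middle index $i$, the piecewise form of $R_i(n;1,\ldots,2,\ldots,1)$ splits $C_{n+1}$ into three ranges: for $r \leq i-1$ and $r \geq i+2$ the input to $\pi_A$ is already $[1]+[2]$, giving $a_i \star a_{i+1}$ inside $f_{[r]}$ (resp.\ $f_{[r-1]}$); the two values $r\in\{i,i+1\}$ produce $a_i\prec a_{i+1}$ and $a_i\succ a_{i+1}$ inside $f_{[i]}$, which again sum to $a_i\star a_{i+1}$ inside $f_{[i]}$. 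Reindexing, the full sum over $[r]$ collapses to $S(f)(a_1,\ldots,a_i\star a_{i+1},\ldots,a_{n+1})$, matching the middle Hochschild term. This establishes the cochain identity and produces the induced map $S_*$ on cohomology.

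For the Gerstenhaber assertion with $M=A$, my plan is to upgrade $S$ to a morphism of operads with multiplication from $\{\mathcal{O}(n)\}$ equipped with $\pi_A$ (Proposition~\ref{dend-operad} and Remark~\ref{dend-mul}) to the classical endomorphism operad $\{\mathrm{Hom}(A^{\otimes n},A)\}$ equipped with the associative product $\mu = \star$. The identity is preserved on the nose, and $S(\pi_A)(a,b) = a\prec b + a\succ b = \mu(a,b)$. The key compatibility $S(f \circ_i g) = S(f) \circ_i S(g)$ follows by the same collapse mechanism: in the three ranges of $r$ prescribed by $R_0(m;1,\ldots,n,\ldots,1)$, either the argument fed to $g$ is the totalizing symbol $[1]+\cdots+[n]$ (yielding $S(g)$ outright), or for $r \in \{i,\ldots,i+n-1\}$ it cycles through the components $g_{[1]},\ldots,g_{[n]}$ (whose sum is $S(g)$), and in each case the outer $f$ is evaluated on a single component $f_{[k]}$; summing over $r$ then produces $S(f) \circ_i S(g)$. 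Since the Gerstenhaber bracket and cup product on cohomology are determined intrinsically by the $\circ_i$ and the chosen multiplication, this operadic morphism descends to a Gerstenhaber algebra morphism $S_*$ on $H^\bullet$.

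The principal obstacle is pure bookkeeping in the middle terms of $\delta_{\mathrm{dend}}$: one must keep careful track of how the piecewise outputs of $R_0$ and $R_i$ interact across the three ranges $r\le i-1$, $r\in\{i,i+1\}$, and $r\ge i+2$, and verify that the reindexings $f_{[r]}$ (for $r<i$) and $f_{[r-1]}$ (for $r>i+1$) indeed recombine with the $r=i,i+1$ contributions into the single sum $\sum_j f_{[j]} = S(f)$. Once this tabulation is laid out systematically, every remaining step is mechanical.
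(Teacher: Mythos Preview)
Your proposal is correct and follows essentially the same route as the paper: the paper dismisses the first part as ``a straightforward computation'' (which your three-range collapse argument for $R_0,R_i$ carries out in detail), and for the Gerstenhaber statement it proves exactly what you plan to prove, namely that $S$ is a morphism of non-$\Sigma$ operads $\mathcal{O}\to\mathrm{End}_A$ sending the multiplication $\pi_A$ to $\star$, so that the induced map on cohomology preserves the Gerstenhaber structure.
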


\begin{proof}
The first part is a straightforward computation. We will prove the second part.

Consider the endomorphism operad $\mathrm{End}_A$ (see Appendix). Then similar to the first part, one can show that
\begin{align*}
S : \mathcal{O} (n) \rightarrow \mathrm{End}_A (n) , ~~~f \mapsto f_{[1]} + \cdots + f_{[n]}
\end{align*}
defines a morphism between operads. If $(A, \prec, \succ)$ is a dendriform algebra with the corresponding associative algebra $(A, \star)$, then the dendriform structure gives rise to a multiplication $\pi_A \in \mathcal{O} (2)$ and the associative structure gives rise to a multiplication (denote by the same symbol) $\star \in \mathrm{End}_A (2)$. It follows from the definition of $S$ that $S (\pi_A) = \star$. Thus, $S$ preserves the corresponding multiplications. As a remark, the induced map $S_* : H^n_{\mathrm{dend}} (A, A) \rightarrow H^n_{\mathrm{Hoch}} (A, A)$ is a morphism between Gerstenhaber algebras.
\end{proof}

\medskip

\subsection{Abelian extensions}

Let $(A, \prec, \succ)$ be a dendriform algebra and $M$ be a vector space. Note that $M$ can be considered as a dendriform algebra with the trivial multiplications.

\begin{defn}
An abelian extension of $A$ by $M$ is given by an extension
\[
\xymatrix{
0 \ar[r] & M \ar[r]^{i} & E \ar[r]^{j} & A \ar[r] & 0
}
\]
of dendriform algebras such that the sequence is split over $\mathbb{K}$.
\end{defn}

An abelian extension induces an $A$-representation on $M$ via the actions
\begin{align*}
\theta_1 ([r] ; a, m) =~& \pi_E ([r] ; s (a), i (m)),\\
\theta_2 ([r] ; m, a) =~& \pi_E ([r] ; i (m), s (a)),
\end{align*}
for $[r] \in C_2$, $a \in A$ and $m \in M$, where $s :A \rightarrow E$ be any section corresponding to the $\mathbb{K}$-splitting. Here $\pi_E : \mathbb{K} [C_2] \otimes E^{\otimes 2} \rightarrow E$ denotes the multiplication associated the dendriform structure on $E.$ One can easily verify that these actions are independent of the choice of $s$.

Two such abelian extensions are said to be equivalent if there is a morphism $\phi : E \rightarrow E'$ between dendriform algebras which makes the following diagram commute
\[
\xymatrix{
0 \ar[r] &  M \ar[r]^{i} \ar@{=}[d] & E \ar[d]^{\phi} \ar[r]^{j} & A \ar[r] \ar@{=}[d] & 0 \\
0 \ar[r] &  M \ar[r]_{i'} & E' \ar[r]_{j'} & A \ar[r]  & 0 .
}
\]

Now, fix an $A$-representation $M$. We denote by $\mathcal{E}xt (A, M)$ the equivalence classes of abelian extensions of $A$ by $M$ for which the induced representation on $M$ is the prescribed one. The next theorem relates $\mathcal{E}xt (A, M)$ with the second cohomology of the dendriform algebra $A$ with coefficients in $M$.

\begin{thm}\label{2-coho-abel}
There is a canonical bijection: $H^2_{\mathrm{dend}}(A, M) \cong \mathcal{E}xt ~(A, M).$
\end{thm}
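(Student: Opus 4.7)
The plan is to construct maps in both directions and verify they are mutually inverse.

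First, I would construct a map $\Phi : H^2_{\mathrm{dend}}(A,M) \to \mathcal{E}xt(A,M)$. Given a $2$-cocycle $f \in C^2_{\mathrm{dend}}(A,M) = \mathrm{Hom}_{\mathbb{K}}(\mathbb{K}[C_2]\otimes A^{\otimes 2}, M)$, define on the vector space $E_f := A \oplus M$ a candidate dendriform structure by twisting the semidirect product of Proposition \ref{prop-semi}:
\begin{align*}
\pi_{E_f}([r]; (a,m), (b,n)) := \bigl(\pi_A([r];a,b),\ \theta_1([r];a,n) + \theta_2([r];m,b) + f([r];a,b)\bigr).
\end{align*}
The verification that $\pi_{E_f}\circ_1 \pi_{E_f} = \pi_{E_f}\circ_2 \pi_{E_f}$ splits into the three components indexed by $[s] \in C_3$; the $A$-component gives $\pi_A\circ_1\pi_A = \pi_A\circ_2\pi_A$, the $M$-components involving $\theta_1,\theta_2$ give the three representation identities of Remark \ref{remark-repn}, and the remaining terms collect exactly into the expression $(\delta_{\mathrm{dend}} f)([s]; a,b,c) = 0$. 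Thus $E_f$ is a dendriform algebra and fits into an abelian extension of $A$ by $M$ inducing the prescribed representation. If $f' = f + \delta_{\mathrm{dend}} g$ with $g \in C^1_{\mathrm{dend}}(A,M) = \mathrm{Hom}_{\mathbb{K}}(A,M)$, the assignment $\phi(a,m) := (a, m + g(a))$ is a dendriform isomorphism $E_f \to E_{f'}$ giving an equivalence of extensions, so $\Phi$ descends to cohomology classes.

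Next I would construct the inverse map $\Psi : \mathcal{E}xt(A,M) \to H^2_{\mathrm{dend}}(A,M)$. Given an extension $0 \to M \xrightarrow{i} E \xrightarrow{j} A \to 0$, choose a $\mathbb{K}$-linear section $s : A \to E$ and define
\begin{align*}
f_s([r]; a, b) := \pi_E([r]; s(a), s(b)) - s(\pi_A([r]; a,b)) \ \in M \qquad ([r]\in C_2,\ a,b\in A),
\end{align*}
which lies in $M$ because $j$ annihilates it. To see $\delta_{\mathrm{dend}} f_s = 0$, expand $\pi_E\circ_1 \pi_E = \pi_E\circ_2 \pi_E$ on elements $s(a), s(b), s(c)$, project into $M$ using the section, and identify the resulting terms (using the formula for $\theta_1,\theta_2$ coming from the extension) with the three summands of $(\delta_{\mathrm{dend}} f_s)$. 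A different section $s' = s + i\circ g$ for some $g \in \mathrm{Hom}_{\mathbb{K}}(A,M)$ changes $f_s$ by $\delta_{\mathrm{dend}} g$, and equivalent extensions produce the same class by transporting sections along $\phi$; hence $\Psi$ is well defined.

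Finally, I would check $\Phi$ and $\Psi$ are mutually inverse. For $\Psi\circ\Phi$, the canonical section $s(a) = (a,0)$ of $E_f$ gives back $f_s = f$ on the nose. For $\Phi\circ\Psi$, the map $E_{f_s} \to E$, $(a,m) \mapsto s(a) + i(m)$, is a dendriform isomorphism realizing the equivalence of extensions. The main obstacle is purely bookkeeping: all the identities one needs involve the combinatorial maps $R_0, R_1, R_2$ evaluated on $C_3$, and one must carefully match up, index by index, the three chunks produced by $R_0(2;1,2), R_0(2;2,1)$, etc., as they appear in the coboundary formula and in the dendriform axioms for $E$. Once the indexing bookkeeping is organized (ideally by reusing the operadic identity $\pi\circ_1\pi = \pi\circ_2\pi$ from Remark \ref{dend-mul} inside the bigger space $\mathcal{O}(n)$ built on $A\oplus M$), every step reduces to a direct matching of summands.
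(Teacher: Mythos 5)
Your proposal is correct and follows essentially the same route as the paper: the forward map is the same twisted semi-direct product $E_f = A\oplus M$ with the cocycle condition accounting for the dendriform identities, the inverse map extracts a $2$-cocycle from a choice of section, and well-definedness and mutual inverseness are checked exactly as you describe (up to an immaterial sign convention in whether the equivalence $(a,m)\mapsto(a,m+g(a))$ corresponds to $f+\delta_{\mathrm{dend}}g$ or $f-\delta_{\mathrm{dend}}g$). No gaps.
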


\begin{proof}
Given a $2$-cocycle $f \in C^2_{\mathrm{dend}} (A, M)$, we consider the $\mathbb{K}$-module $E = A \oplus M$ with the following dendriform structure
\begin{align*}
\pi_E ([r]; (a, m), (b, n)) =~& \big( \pi_A ([r]; a, b), ~\theta_1 ([r]; a , n) + \theta_2 ( [r]; m, b) + f ([r]; a, b) \big).
\end{align*}
(Observe that when $f =0$ this is the semi-direct product.)
Using the fact that $f$ is a $2$-cocycle, it is easy to verify that $\pi_E$ defines a dendriform algebra structure on $E$. Moreover, $0 \rightarrow M \rightarrow E \rightarrow A \rightarrow 0$ defines an abelian extension with the obvious splitting. 
Let $\pi_E'$ be the multiplication on $E = A \oplus M$
associated to the cohomologous $2$-cocycle $f - \delta_{\mathrm{dend}} (g)$, for some $g \in C^1_{\mathrm{dend}} (A, M)$. The equivalence between abelian extensions $(E, \pi_E) $ and $(E, \pi_E')$ is given by $(a, m) \mapsto (a, m + g (a))$. Therefore, the map $H^2_{\mathrm{dend}} (A, M) \rightarrow \mathcal{E}xt (A, M) $ is well defined.

Conversely, given an extension 
$0 \rightarrow M \xrightarrow{i} E \xrightarrow{j} A \rightarrow 0$ with splitting $s$, we may consider $E = A \oplus M$ and $s$ is the map $s (a) = ( a, 0).$ With respect to the above splitting, the maps $i$ and $j$ are the obvious ones. Since $j \circ \pi_E ([r]; (a, 0), (b, 0)) = \pi_A ([r]; a, b)$ as $j$ is an algebra map, we have $\pi_E ([r]; (a, 0), ( b, 0)) = (f ([r]; a, b),~ \pi_A ([r]; a, b))$, for some $f \in C^2_{\mathrm{dend}} (A, M).$ 
Moreover, $\pi_E$ defines a dendriform algebra structure on $E$ implies that $f$ is a $2$-cocycle. Similarly, one can observe that any two equivalent extensions are related by a map $E = A \oplus M \xrightarrow{\phi} A \oplus M = E'$, $(a, m) \mapsto (a, m + g(a))$ for some $g \in C^1_{\mathrm{dend}} (A, M)$. Since $\phi$ is an algebra morphism, we have
\begin{align*}
\phi \circ \pi_E ([r]; ( a, 0), ( b, 0)) = \pi'_{E} ([r];~ \phi ( a, 0) , \phi (b, 0))
\end{align*}
which implies that $f' ([r]; a, b) = f ([r]; a, b) ~-~ (\delta_{\mathrm{dend}} g)([r]; a, b)$. Here $f'$ is the $2$-cocycle induced from the extension $E'$. This shows that the map $\mathcal{E}xt (A, M) \rightarrow H^2_{\mathrm{dend}} (A, M)$ is well defined. Moreover, these two maps are inverses to each other.
\end{proof}

\medskip

\section{Deformations}
In this section, we study deformations of dendriform algebras and provide cohomological interpretations. We also relate deformations of a dendriform algebra with deformations of the corresponding associative algebra.

\begin{defn}
Let $(A, \prec, \succ)$ be a dendriform algebra. A formal $1$-parameter deformation of $A$ is defined by two $\mathbb{K}[[t]]$-bilinear maps $\prec_t : A[[t]] \times A [[t]] \rightarrow A[[t]]$ and $\succ_t : A[[t]] \times A [[t]] \rightarrow A[[t]]$ that are of the form
\begin{align*}
a \prec_t b = \sum_{i \geq 0} (a \prec_i b )~t^i ~~~\text{ and }~~~ a\succ_t b= \sum_{i \geq 0} (a \succ_i b)~ t^i
\end{align*}
with $\prec_0 =~ \prec$ and $\succ_0 =~ \succ$ such that $(A[[t]], \prec_t,~ \succ_t)$ forms a dendriform algebra.
\end{defn}

That is, one must have the following identities
\begin{align*}
& (a \prec_t b ) \prec_t c = a \prec_t (b \prec_t c + b \succ_t c),\\
& (a \succ_t b) \prec_t c =  a \succ_t (b \prec_t c),\\
& (a \prec_t b + a \succ_t b) \succ_t c = a \succ_t (b \succ_t c)
\end{align*}
hold, for all $a, b, c \in A$. This is equivalent to following system of equations: for all $n \geq 0$,
\begin{align*}
&\sum_{i+j = n} \big(  (a \prec_i b ) \prec_j  - a \prec_i (b \prec_j c + b \succ_j c)  \big) = 0,\\
&\sum_{i+j = n} \big(  (a \succ_i b) \prec_j c -  a \succ_i (b \prec_j c)  \big) = 0,\\
&\sum_{i+j = n} \big(  (a \prec_i b + a \succ_i b) \succ_j c - a \succ_i (b \succ_j c)  \big) = 0,
\end{align*}
for all $a, b, c \in A.$

To write these system of equations in more compact form, we use the following notations. For each $i \geq 0$, define a map $\pi_i : \mathbb{K}[C_2] \otimes A^{\otimes 2} \rightarrow A$ by
$$\pi_i ([r]; a, b) = \begin{cases} a \prec_i b ~~~ &\mathrm{ if } ~~ [r]=[1] \\
a \succ_i b ~~~ &\mathrm{ if } ~~ [r] = [2]. \end{cases}$$
Thus, we have $\pi_0 = \pi_A$. With these notations, the above three system of identities can be expressed as
\begin{align}\label{def-sys}
\sum_{i+j = n} \pi_i \circ \pi_j = 0, ~~~ \text{ for all } n \geq 0.
\end{align}
For $n =1$, we get $\pi_A \circ \pi_1 + \pi_1 \circ \pi_A = 0$. This implies that the map $\pi_1 : \mathbb{K}[C_2] \otimes A^{\otimes 2}  \rightarrow A$ defines a $2$-cocycle of $A$ with coefficients in $A$. It is called the infinitesimal of the deformation. In particular, if $\pi_1 = \cdots = \pi_{n-1} = 0$, then $\pi_n$ defines a $2$-cocycle.

\begin{defn} Two deformations $(\prec_t, \succ_t)$ and $(\prec_t', \succ_t')$ of $A$ are said to be equivalent if there exists a formal map $\phi_t = \sum_{i \geq 0} \phi_i t^i$, where $\phi_i \in \text{Hom}_{\mathbb{K}} (A, A)$ with $\phi_0 = \text{id}_A$ such that
\begin{align*}
\phi_t (a \prec_t b) = \phi_t (a) \prec_t' \phi_t (b),   ~~~ \mathrm{ and } ~~~ \phi_t (a \succ_t b) = \phi_t (a) \succ_t' \phi_t (b).
\end{align*}
\end{defn}

Note that the conditions of the above definition are equivalent to
\begin{align*}
\sum_{i+j = n } \phi_i \circ \pi_j ([r]; a , b) = \sum_{i+j+k = n} \pi_k' ([r]; \phi_i (a), \phi_j (b)), ~~\text{ for } [r] \in C_2.
\end{align*}
Hence for $n = 1$, we get 
\begin{align*}
\pi_1 ([r]; a, b) + \phi_1 \circ \pi_A ([r]; a, b) = \pi_A ([r]; \phi_1 (a), b) + \pi_A ([r]; a, \phi_1 (b)) + \pi_1' ([r]; a, b),
\end{align*}
or, equivalently,
\begin{align*}
(\pi_1 - \pi_1) ([r]; a, b) =~&  \pi_A ([r]; \phi_1 (a), b) + \pi_A ([r]; a, \phi_1 (b)) -  \phi_1 \circ \pi_A ([r]; a, b) \\
=~&  ( \pi_A \circ \phi_1 - \phi_1 \circ \pi_A ) ([r]; a, b).
\end{align*}
Hence, $\pi_1 - \pi_1' = \delta_{\mathrm{dend}} (\phi_1)$. Therefore, the infinitesimals corresponding to equivalent deformations are cohomologous.

\medskip

Let $(A, m, R)$ be a Rota-Baxter algebra with the corresponding dendriform algebra $(A, \prec, \succ)$.
A deformation of $(A, m, R)$ is given by a deformation $m_t = \sum_{i \geq 0} m_i t^i$ of the associative algebra $A$ and a formal sum $R_t = \sum_{i \geq 0} R_i t^i$ where each $R_i \in \mathrm{End} (A, A)$ with $R_0 = R$ such that $(A[[t]], m_t, R_t)$ is a Rota-Baxter algebra. Then it follows from the result of Aguiar that $(A[[t]], \prec_{A[[t]]}, \succ_{A[[t]]})$ is a dendriform algebra. In other words, the triplet $(\prec_t, \succ_t)$ is a deformation of the corresponding dendriform algebra $(A, \prec, \succ)$ where
\begin{align*}
x \prec_t y =~& \sum_{n} \big( \sum_{i+j=n} m_i (x, R_j (y))\big) ~t^n, \\
x \succ_t y =~& \sum_{n} \big( \sum_{i+j=n} m_i (R_j (x), y) \big) ~t^n .
\end{align*}

\begin{remark}
Let $(A, \prec, \succ)$ be a dendriform algebra and $(\prec_t, \succ_t)$ be a deformation of it.  Then it follows that $(A[[t]], \prec_t , ~\succ_t)$ is a dendriform algebra. Hence the pair $(A[[t]], \prec_t + \succ_t)$ is a deformation of the corresponding associative algebra $(A, \star = \prec + \succ).$
\end{remark}

\medskip


\begin{defn}
A dendriform algebra $(A, \prec, \succ)$ is called rigid if every deformation of $A$ is equivalent to the trivial deformation $(\prec_t = \prec, ~ \succ_t = \succ)$.
\end{defn}

Let $(\prec_t, \succ_t)$ be a deformation of $A$. Then it is equivalent to some deformation
\begin{align*}
(\prec_t' =~ \prec + \sum_{i \geq k} \prec_i' , ~ \succ_t' =~ \succ + \sum_{i \geq k} \succ_i')
\end{align*}
in which the first non-zero term $\pi_k' = (\prec_k', \succ_k')$ is not a coboundary. This can be proved along the line of Gerstenhaber for associative algebras \cite{gers2}. Hence we obtain the following.

\begin{thm}
If $H^2_{\mathrm{dend}} (A, A) = 0$, then $A$ is rigid.
\end{thm}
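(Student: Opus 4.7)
The plan is to follow the classical Gerstenhaber rigidity argument \cite{gers2}, adapted to the dendriform cochain complex. The key input is that a deformation whose low-order terms vanish has its next term as a $2$-cocycle, which we can then kill using a $1$-cochain produced by the vanishing of $H^2_{\mathrm{dend}}(A,A)$.

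First I would reduce, as the paragraph preceding the theorem already observes, to a deformation $\pi_t = \sum_{i \geq 0} \pi_i t^i$ with $\pi_0 = \pi_A$ and $\pi_1 = \cdots = \pi_{k-1} = 0$ for some $k \geq 1$. Extracting the coefficient of $t^k$ from the master equation (\ref{def-sys}) gives $\pi_A \circ \pi_k + \pi_k \circ \pi_A = 0$, which by Remark \ref{gers-rem} is exactly $\delta_{\mathrm{dend}}(\pi_k) = 0$. Since $H^2_{\mathrm{dend}}(A,A) = 0$, there is $\phi_k \in C^1_{\mathrm{dend}}(A,A) = \mathrm{Hom}_{\mathbb{K}}(A,A)$ with $\delta_{\mathrm{dend}}(\phi_k) = \pi_k$.

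Next I would use $\phi_k$ to produce an equivalent deformation whose non-trivial terms start in degree $\geq k+1$. Take $\phi_t^{(k)} := \mathrm{id}_A + t^k \phi_k$, which is invertible in $\mathrm{Hom}_{\mathbb{K}}(A,A)[[t]]$, and transport $\pi_t$ to the equivalent $\pi_t'$ via $\phi_t^{(k)}$. Expanding the equivalence relation $\phi_t^{(k)} \circ \pi_t = \pi_t' \circ (\phi_t^{(k)} \otimes \phi_t^{(k)})$ order by order in $t$ shows $\pi_j' = 0$ for $j < k$, while the degree-$k$ comparison yields precisely the identity that appears immediately before the definition of rigidity (with $k$ in place of $1$), namely $\pi_k - \pi_k' = \delta_{\mathrm{dend}}(\phi_k)$. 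Combined with $\delta_{\mathrm{dend}}(\phi_k) = \pi_k$, this gives $\pi_k' = 0$.

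Finally I would iterate, obtaining formal isomorphisms $\phi_t^{(k)}, \phi_t^{(k+1)}, \ldots$ whose composition $\Phi_t = \cdots \circ \phi_t^{(k+1)} \circ \phi_t^{(k)}$ converges in the $t$-adic topology on $\mathrm{Hom}_{\mathbb{K}}(A,A)[[t]]$, because the $j$-th factor is congruent to $\mathrm{id}_A$ modulo $t^{k+j}$. The limit $\Phi_t$ conjugates the original $\pi_t$ into the trivial deformation, establishing rigidity. The one step that is not purely formal is the degree-$k$ cancellation in the middle paragraph; the main point to check is that the $[r]$-indexed composition structure makes the transport formula land exactly on the dendriform coboundary of Section \ref{sec-2}. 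This is precisely the content of the operad-with-multiplication viewpoint of Proposition \ref{dend-operad} and Remark \ref{gers-rem}, so no new combinatorial computation is really required beyond what has already been done in the excerpt.
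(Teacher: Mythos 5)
Your proof is correct and follows essentially the same route as the paper, which simply invokes the Gerstenhaber normalization ("every deformation is equivalent to one whose first non-zero term is not a coboundary") and cites \cite{gers2}; your iteration with $\phi_t^{(k)} = \mathrm{id}_A + t^k\phi_k$ and the $t$-adic convergence of the composite is exactly the argument behind that cited lemma, spelled out. The one step you flag — that the transport formula at order $k$ lands on $\delta_{\mathrm{dend}}(\phi_k)$ — is indeed already verified in the paper for $k=1$ in the computation preceding the definition of rigidity, and carries over verbatim.
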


\medskip

\subsection{An universal deformation}
Here we construct an universal deformation formula (UDF) for a dendriform algebra. We will show that this UDF for dendriform algebra can be seen as splitting of the UDF constructed by Gerstenhaber for associative algebras \cite{gers18}.

\begin{thm}\label{universal-thm}
Let $(A, \prec, \succ)$ be a dendriform algebra and $D_i : A \rightarrow A$ be $\mathbb{K}$-linear maps ($i=1,2$) satisfying
\begin{align*}
D_i (a \prec b ) = a \prec (D_i b) + (D_i a) \prec b,\\
D_i (a \succ b ) = a \succ (D_i b) + (D_i a) \succ b.
\end{align*}
Further, if $D_1 \circ D_2 = D_2 \circ D_1$, then
\begin{align*}
a \prec_t b =~& \sum_{n=0}^\infty \frac{t^n}{n!} ~ D_1^n (a) \prec D_2^n (b),\\
a \succ_t b =~& \sum_{n=0}^\infty \frac{t^n}{n!} ~ D_1^n (a) \succ D_2^n (b)
\end{align*}
defines a deformation of $A$.
\end{thm}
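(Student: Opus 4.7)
The plan is to verify the three dendriform identities \eqref{dend-eqn-1}--\eqref{dend-eqn-3} directly for $\prec_t$ and $\succ_t$ by expanding each side, collecting powers of $t$, and reducing term by term to the corresponding identity in $A$.

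I would begin with \eqref{dend-eqn-1}. Expanding the left-hand side by the outer $\prec_t$ first gives $\sum_m \frac{t^m}{m!}\, D_1^m(a \prec_t b) \prec D_2^m c$, and since $D_1$ is assumed to be a derivation for $\prec$, the iterated Leibniz rule yields
\[
D_1^m\bigl(D_1^n a \prec D_2^n b\bigr) \;=\; \sum_{k+l=m} \binom{m}{k}\, D_1^{k+n}a \prec D_1^l D_2^n b.
\]
Using $D_1 D_2 = D_2 D_1$ to rewrite $D_1^l D_2^n b = D_2^n D_1^l b$, the left-hand side of \eqref{dend-eqn-1} becomes the triple sum
\[
\sum_{n,k,l \geq 0} \frac{t^{n+k+l}}{n!\,k!\,l!}\,\bigl(D_1^{k+n}a \prec D_2^n D_1^l b\bigr) \prec D_2^{k+l}c.
\]
Applying \eqref{dend-eqn-1} in $A$ to each summand replaces the associator $(x \prec y) \prec z$ by $x \prec (y \prec z + y \succ z)$.

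I would then expand the right-hand side $a \prec_t (b \prec_t c + b \succ_t c)$ by the same method, using now that $D_2$ is a derivation for both $\prec$ and $\succ$ together with $D_1 D_2 = D_2 D_1$. A direct reindexing of summation variables (setting $p = k+n$, $q = l$, $r = n$, $s = k$) matches this with the previous triple sum term by term, establishing \eqref{dend-eqn-1} for $\prec_t, \succ_t$. Identities \eqref{dend-eqn-2} and \eqref{dend-eqn-3} follow by exactly the same pattern, the only change being which of the three dendriform identities of $A$ is invoked to rewrite the associator in each summand.

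The calculation is purely bookkeeping and presents no conceptual obstacle. The crucial inputs are (i) that $D_1$ and $D_2$ are derivations for \emph{both} $\prec$ and $\succ$, so every Leibniz expansion produces only terms of the shape $D_1^\alpha a \;\prec/\succ\; D_1^\beta D_2^\gamma b$ (similarly for the third slot), and (ii) the commutativity $D_1 D_2 = D_2 D_1$, which lets one normalize each monomial so that the $D_1$'s stand to the left of the $D_2$'s, making the indices on the two sides match after reindexing. At $t=0$ the identities reduce to the original dendriform axioms in $A$, and the mechanism above disposes of every higher coefficient of $t$ simultaneously. Note that passing through the associative universal formula for $(A, \star = \prec + \succ)$ is \emph{not} sufficient on its own, since it would only yield the associativity of $\prec_t + \succ_t$ and not the three finer identities; this is why the verification must be done for each of the three identities separately.
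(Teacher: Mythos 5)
Your proof is correct and is essentially the computation the paper performs: the paper packages the three identities into the single operadic equation $\sum_{i+j=n}\pi_i\circ_1\pi_j=\sum_{i+j=n}\pi_i\circ_2\pi_j$ with $\pi_i=\tfrac{1}{i!}D_1^i\cdot D_2^i$, and its binomial expansion (justified by $\delta_{\mathrm{dend}}(D_1)=\delta_{\mathrm{dend}}(D_2)=0$) together with the dummy-variable swap using $D_1D_2=D_2D_1$ is exactly your iterated Leibniz rule plus reindexing, carried out at the level of cochains rather than elements. Your element-wise verification of the three identities separately is just an unpacked version of the same argument, and your closing observation that the associative UDF alone would not suffice is accurate.
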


\begin{proof}
Define maps $\pi_i : \mathbb{K}[C_2] \otimes A^{\otimes 2} \rightarrow A$ by
\begin{align*}
\pi_i ([r]; a, b) := \frac{1}{i!} ~\pi_A ([r]; D_1^i (a), D_2^i (b)) = \frac{1}{i!} ~(D_1^i \cdot D_2^i)([r]; a, b),
\end{align*}
where the notation $D_1^i \cdot D_2^i$ has been defined in Appendix.
To prove that $(\prec_t, ~ \succ_t)$ is a deformation, we need to prove that $\sum_{i+j = n} \pi_i \circ \pi_j = 0$, or, equivalently, $ \sum_{i+j = n} \pi_i \circ_1 \pi_j = \sum_{i+j = n} \pi_i \circ_2 \pi_j$. First observe that
\begin{align}
\sum_{i+j = n} \pi_i \circ_1 \pi_j = \sum_{i=0}^n \pi_i \circ_1 \pi_{n-i} =~&  \sum_{i=0}^n \frac{1}{i! (n-i)!} ~ (D_1^i \circ (D_1^{n-i} \cdot {D_2}^{n-i})) \cdot {D_2}^i \nonumber \\
=~& \sum_{i=0}^n \frac{1}{i! (n-i)!} \sum_{j=0}^i {i \choose j}~ D_1^{n-i+j} \cdot (D_1^{i-j} \circ {D_2}^{n-i}) \cdot {D_2}^i  \nonumber \\
=~& \sum_{i,j=0, 1, \ldots, n, i \geq j} \frac{1}{(n-i)! j! (i-j)!}~ D_1^{n-j} \cdot (D_1^j \circ {D_2}^{n-i}) \cdot {D_2}^i. \label{univ-formula}
\end{align}
Similarly,
\begin{align*}
\sum_{i+j = n} \pi_i \circ_2 \pi_j = \sum_{i=0}^n \pi_i \circ_2 \pi_{n-i} =~&  \sum_{i=0}^n \frac{1}{i! (n-i)!}~ D_1^i  \cdot ({D_2}^i \circ (D_1^{n-i} \cdot {D_2}^{n-i})) \\
=~& \sum_{i=0}^n \frac{1}{i! (n-i)!} \sum_{j=0}^i {i \choose j} ~D_1^i \cdot ({D_2}^j \circ D_1^{n-i}) \cdot {D_2}^{n-j} \\
=~& \sum_{i,j=0, 1, \ldots, n, i \geq j} \frac{1}{(n-i)! j! (i-j)!} ~D_1^i \cdot ({D_2}^j \circ D_1^{n-i}) \cdot {D_2}^{n-j}.
\end{align*}
(The second last equality in each of the above two calculations follows from the fact that $\delta_{\text{dend}} (D_1) = \delta_{\text{dend}} (D_2) = 0$ \cite{gers18}.)
By replacing the dummy variables $i \leftrightarrow n-j$ and $j \leftrightarrow n-i$ and using the fact that $D_1 , {D_2}$ commute, we get the same expression as in  (\ref{univ-formula}). 
Hence the proof.
\end{proof}

\begin{remark}
Let $(A, \prec, \succ)$ be a dendriform algebra with the corresponding associative algebra $(A, \star)$. Let $D_1, D_2: A \rightarrow A$ be two $\mathbb{K}$-linear maps satisfying the conditions of Theorem \ref{universal-thm}. Then it follows that $D_1$ and $D_2$ are two commuting derivations of the associative algebra $(A, \star)$. Moreover, if we define
\begin{align*}
a \star_t b := a \prec_t b + a \succ_t b =  \sum_{n=0}^\infty \frac{t^n}{n!}~ D_1^n (a) \star D_2^n (b),
\end{align*}
then $\star_t$ is the UDF constructed by Gerstenhaber for associative algebras \cite{gers18}. Therefore, our UDF for dendriform algebras can be seen as a splitting of the Gerstenhaber's one.
\end{remark}

\medskip

\subsection{Extensions of deformations}
A deformation $(\prec_t, \succ_t)$ of $A$ is said to be of order $n$ if $\prec_t = \sum_{i=0}^n \prec_i t^i$ and $\succ_t = \sum_{i=0}^n \succ_i t^i$. In other words, $\pi_i = 0$, for all $i \geq n+1$. In the following, we assume that $H^2_{\text{dend}} (A, A) \neq 0$ so that we may obtain non-trivial deformations. Here we shall consider the problem of extending a deformation of order $n$ to a deformation of order $n+1$.

Suppose there are maps $\prec_{n+1}, ~ \succ_{n+1} : A^{\otimes 2} \rightarrow A$
such that $(\prec_t' =~ \prec_t + \prec_{n+1} t^{n+1},~ \succ_t' =~ \succ_t + \succ_{n+1} t^{n+1} )$ defines a deformation of $A$. Then we say that $(\prec_t, \succ_t)$ extends to a deformation of order $n+1$. As before, we define
 $\pi_{n+1} \in  \mathrm{Hom}_{\mathbb{K}} (\mathbb{K}[C_2] \otimes A^{\otimes 2}, A)$ by
 \begin{align*}
 \pi_{n+1} ([r]; a, b) = \begin{cases} a \prec_{n+1} b ~~~ &\mathrm{ if } ~~ [r]=[1] \\
a \succ_{n+1} b ~~~ &\mathrm{ if } ~~ [r] = [2]. \end{cases}
 \end{align*}

Since $(\prec_t, \succ_t)$ is a deformation of order $n$, it follows from (\ref{def-sys}) that
\begin{align*}
\pi_A \circ \pi_i + \pi_1 \circ \pi_{i-1} + \cdots + \pi_{i-1} \circ \pi_1 + \pi_i \circ \pi_A = 0, \text{ for } i=1, \ldots, n,
\end{align*}
or, equivalently, $\delta_{\mathrm{dend}} (\pi_i) = - \sum_{p+q = i, p, q \geq 1} \pi_p \circ \pi_q$. For $(\prec_t' =~ \prec_t + \prec_{n+1} t^{n+1}, \succ_t' =~ \succ_t + \succ_{n+1} t^{n+1} )$ to be a deformation, one more deformation equation need to satisfy, namely,
\begin{align*}
\delta_{\mathrm{dend}} (\pi_{n+1}) =  - \sum_{i+j =n +1, i, j \geq 1} \pi_i \circ \pi_j.
\end{align*}
The right hand side of the above equation is in $\mathrm{Hom}_{\mathbb{K}} (\mathbb{K}[C_3] \otimes A^{\otimes 3}, A)$ and it is called the obstruction to extend  the deformation $(\prec_t, \succ_t)$ to the next order.

\begin{lemma}
The obstruction is a $3$-cocycle, i.e
\begin{align*}
\delta_{\mathrm{dend}} (  - \sum_{i+j =n +1, i, j \geq 1} \pi_i \circ \pi_j ) = 0.
\end{align*}
\end{lemma}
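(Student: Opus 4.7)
The plan is to interpret everything inside the non-$\Sigma$ operad with multiplication $(\mathcal{O},\{\circ_i\},\pi_A)$ of Proposition \ref{dend-operad} and Remark \ref{dend-mul}. By the standard construction for such operads, the graded vector space $\bigoplus_n\mathcal{O}(n)$ carries a graded Lie bracket $[-,-]$ (the Gerstenhaber bracket obtained by antisymmetrizing the pre-Lie $\circ$-product), and the cochain differential $\delta_{\mathrm{dend}}$ coincides, up to an overall sign depending only on the arity, with $[\pi_A,-]$. This is exactly the fact invoked in Remark \ref{gers-rem} to obtain $\delta_{\mathrm{dend}}^2=0$ together with the Gerstenhaber algebra structure on $H^\bullet_{\mathrm{dend}}(A,A)$. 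In particular, to prove the lemma it suffices to show $[\pi_A,\mathrm{Ob}_{n+1}]=0$, where $\mathrm{Ob}_{n+1}:=-\sum_{i+j=n+1,\,i,j\geq 1}\pi_i\circ\pi_j$.

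Next I would assemble the order-$n$ deformation into the truncated formal element $\Pi_t:=\sum_{i=0}^{n}t^i\pi_i\in\mathcal{O}(2)[[t]]$ with $\pi_0=\pi_A$. Every $\pi_i$ is binary and hence of Gerstenhaber degree $1$; a short symmetrization check, using that $[f,g]=f\circ g+g\circ f$ for binary $f,g$, gives $\sum_{i+j=k}\pi_i\circ\pi_j=\tfrac{1}{2}\sum_{i+j=k}[\pi_i,\pi_j]$. Thus the deformation equations (\ref{def-sys}) up to order $n$ translate precisely into the truncated Maurer--Cartan equation $[\Pi_t,\Pi_t]\equiv 0\pmod{t^{n+1}}$. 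Since $\Pi_t$ only involves $\pi_i$ with $0\leq i\leq n$, the coefficient of $t^{n+1}$ in $[\Pi_t,\Pi_t]$ is literally $\sum_{i+j=n+1,\,i,j\geq 1}[\pi_i,\pi_j]=-2\,\mathrm{Ob}_{n+1}$ (the would-be $\pi_{n+1}$ terms cannot appear because the index would exceed $n$).

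The last step is to apply the graded Jacobi identity to the odd-degree element $\Pi_t$: plugging three copies of $\Pi_t$ into the Jacobi relation and using $|\Pi_t|=1$ yields $3[\Pi_t,[\Pi_t,\Pi_t]]=0$, and hence $[\Pi_t,[\Pi_t,\Pi_t]]=0$ since $\mathrm{char}\,\mathbb{K}=0$. Extracting the coefficient of $t^{n+1}$ from this identity, and using that every coefficient of $[\Pi_t,\Pi_t]$ in degrees $\leq n$ vanishes by hypothesis, the only surviving contribution is $[\pi_A,\,\text{coeff of }t^{n+1}\text{ in }[\Pi_t,\Pi_t]]=-2[\pi_A,\mathrm{Ob}_{n+1}]$. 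Therefore $[\pi_A,\mathrm{Ob}_{n+1}]=0$, which by the identification of the first paragraph gives $\delta_{\mathrm{dend}}(\mathrm{Ob}_{n+1})=0$. The only real bookkeeping hurdle is confirming that the paper's $\pi_i\circ\pi_j$ is the symmetrization-friendly pre-Lie product $\pi_i\circ_1\pi_j-\pi_i\circ_2\pi_j$ (consistent with $\pi_A\circ\pi_A=0$ of Remark \ref{dend-mul}), so that (\ref{def-sys}) does read $[\Pi_t,\Pi_t]\equiv 0\pmod{t^{n+1}}$; once this is settled, graded Jacobi does all of the remaining work and no further computation involving the combinatorial maps $R_0,R_i$ is needed.
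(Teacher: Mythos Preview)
Your proof is correct and takes a different route from the paper. The paper argues directly at the level of the pre-Lie product: it applies the Gerstenhaber-type identity $\delta_{\mathrm{dend}}(\pi\circ\pi') = \pi\circ\delta_{\mathrm{dend}}(\pi') - \delta_{\mathrm{dend}}(\pi)\circ\pi' + \pi'\cdot\pi - \pi\cdot\pi'$, feeds in the lower-order deformation equations $\delta_{\mathrm{dend}}(\pi_i) = -\sum_{p+q=i,\,p,q\geq 1}\pi_p\circ\pi_q$, and reduces the obstruction to the associator sum $\sum_{p+q+r=n+1}\big(\pi_p\circ(\pi_q\circ\pi_r) - (\pi_p\circ\pi_q)\circ\pi_r\big)$, which vanishes pairwise by the pre-Lie identity (\ref{pre-lie-iden}). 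You instead package the data into the formal odd element $\Pi_t$, rewrite (\ref{def-sys}) as a truncated Maurer--Cartan equation, and let the graded Jacobi identity $[\Pi_t,[\Pi_t,\Pi_t]]=0$ do the work after extracting the $t^{n+1}$ coefficient. Both arguments rest on the same structural fact (pre-Lie $\Rightarrow$ graded Lie), so neither is strictly more general; the paper's version is slightly more elementary in that it never invokes the bracket or characteristic $0$, while yours is the cleaner conceptual picture and makes manifest that the obstruction mechanism here is the standard dgla one. Your closing hedge is unnecessary: the Appendix gives $f\circ g = \sum_i(-1)^{(i-1)(n-1)}f\circ_i g$, so for binary $f,g$ one has $f\circ g = f\circ_1 g - f\circ_2 g$, confirming both $\pi_A\circ\pi_A=0$ and the symmetrization $\sum_{i+j=k}\pi_i\circ\pi_j=\tfrac12\sum_{i+j=k}[\pi_i,\pi_j]$ exactly as you need.
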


\begin{proof}
For any $\pi, \pi' \in C^2_{\text{dend}} (A, A)$, it is easy to see that
\begin{align*}
\delta_{\text{dend}} (\pi \circ \pi') =  \pi \circ \delta_{\text{dend}} (\pi') - \delta_{\text{dend}} (\pi) \circ \pi' ~+~ \pi' \cdot \pi -~ \pi \cdot \pi'.
\end{align*}
(See \cite[Theorem 3]{gers} for the case of associative algebras.) Therefore,
\begin{align*}
\delta_{\text{dend}} \big(- \sum_{i+j =n+1, i, j \geq 1} \pi_i \circ \pi_{j} \big) =~& - \sum_{i+j =n+1, i, j \geq 1} \big( \pi_i \circ \delta_{\text{dend}} (\pi_j) - \delta_{\text{dend}} (\pi_i) \circ \pi_j \big) \\
=~& \sum_{p+q+r = n+1, p, q, r \geq 1} \big(  \pi_p \circ (\pi_q \circ \pi_r) - (\pi_p \circ \pi_q) \circ \pi_r \big)\\ =~& \sum_{p+q+r = n+1, p, q, r \geq 1} A_{p, q, r}    \qquad \mathrm{(say)}.
\end{align*}
The product $\circ$ is not associative, however, they satisfy the pre-Lie identity (see Appendix). This in particular implies that $A_{p, q, r} = 0$ whenever $q = r$. Finally, if $q \neq r$ then $A_{p, q, r} + A_{p, r, q} = 0$ by the pre-Lie identity. Hence we have $\sum_{p+q+r = n+1, p, q, r \geq 1} A_{p, q, r}   = 0$.
\end{proof}

Hence it follows that the obstruction defines a cohomology class in $H^3_{\mathrm{dend}}(A, A)$. If this cohomology class is zero, then the obstruction is given by a coboundary (say $\delta_{\text{dend}} (\pi_{n+1})$). In other words, $(\prec_t' = ~\prec_t + \prec_{n+1}t^{n+1},~ \succ_t' =~ \succ_t + \succ_{n+1} t^{n+1})$ defines a deformation of order $n+1$.

Thus we obtain the following.

\begin{thm}\label{third-extension}
Let $A$ be a dendriform algebra. If $H^3_{\mathrm{dend}} (A, A) = 0$, every deformation of finite order can be extended to a deformation of next order.
\end{thm}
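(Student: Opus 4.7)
The plan is to treat Theorem~\ref{third-extension} as essentially a direct consequence of the obstruction lemma just established. Fix an order-$n$ deformation $(\prec_t,\succ_t)$ of $A$. Any candidate extension to order $n+1$ is parametrised by a single cochain $\pi_{n+1}\in C^2_{\mathrm{dend}}(A,A)$, whose components at $[r]=[1]$ and $[r]=[2]$ recover the desired maps $\prec_{n+1}$ and $\succ_{n+1}$. Since the deformation equations~(\ref{def-sys}) of order $\le n$ already hold by hypothesis, the only new constraint on $\pi_{n+1}$ is the order-$(n+1)$ case of that system.

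Separating out the two terms that involve $\pi_0=\pi_A$, and invoking the identification from Remark~\ref{gers-rem} of $\delta_{\mathrm{dend}}$ with the Gerstenhaber-style coboundary induced by the multiplication $\pi_A$ (the same identification already used in Section~\ref{sec-2} to rewrite the order-$i$ equation as $\delta_{\mathrm{dend}}(\pi_i)=-\sum_{p+q=i,\, p,q\ge 1}\pi_p\circ\pi_q$), the order-$(n+1)$ condition becomes
\[
\delta_{\mathrm{dend}}(\pi_{n+1}) \;=\; -\!\!\sum_{\substack{i+j=n+1 \\ i,\,j\ge 1}} \pi_i\circ \pi_j .
\]
Thus the extension problem reduces to showing that the right-hand side is a coboundary.

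This is precisely where the preceding lemma takes over: it says that the right-hand side is a $3$-cocycle, and therefore determines a well-defined class in $H^3_{\mathrm{dend}}(A,A)$. Under the standing hypothesis $H^3_{\mathrm{dend}}(A,A)=0$ this class vanishes, so one can choose a $2$-cochain $\pi_{n+1}$ whose coboundary equals the obstruction. Declaring $\prec_{n+1}$ and $\succ_{n+1}$ to be its two components then yields the required order-$(n+1)$ deformation.

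Since the essential input (verifying that the obstruction is a cocycle via the pre-Lie relation in the operad $\{\mathcal{O}(n)\}$) has already been carried out in the lemma, I do not anticipate a serious obstacle. The only step requiring some care is the book-keeping of signs in the identification of $\pi_A\circ(\cdot)+(\cdot)\circ\pi_A$ with $\delta_{\mathrm{dend}}$ on $2$-cochains, but this is standard for any operad with multiplication and is consistent with the conventions fixed in Section~\ref{sec-2}.
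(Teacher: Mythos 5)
Your proposal is correct and follows essentially the same route as the paper: the only new constraint on $\pi_{n+1}$ is the order-$(n+1)$ deformation equation $\delta_{\mathrm{dend}}(\pi_{n+1}) = -\sum_{i+j=n+1,\, i,j\ge 1}\pi_i\circ\pi_j$, the preceding lemma shows the right-hand side is a $3$-cocycle, and the hypothesis $H^3_{\mathrm{dend}}(A,A)=0$ lets you choose $\pi_{n+1}$ realizing it as a coboundary. No gaps.
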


\begin{remark}
Dendriform algebra is an algebra which can be expressed as a multiplication in a certain operad (Remark \ref{dend-mul}). There are other Loday-type algebras (di- and triassociative algebras, dendriform trialgebras, cubical algebras \cite{loday-ronco}) which also has the same feature \cite{yau}. In \cite{das} we study a formal deformation theory of multiplications in an operad, which gives deformation theory of all Loday-type algebras.
\end{remark}

\medskip

\section{Dendriform algebras up to homotopy}
In this section, we introduce a notion of $\mathrm{Dend}_\infty$-algebra (dendriform algebra up to homotopy) in which the dendriform identities hold up to certain homotopy. A coderivation interpretation of $\mathrm{Dend}_\infty$-algebra is also given. We also show that a $\mathrm{Dend}_\infty$-algebra can be thought as a splitting of $A_\infty$-algebra. 

\medskip

\subsection{$\mathrm{Dend}_\infty$-algebras}

\begin{defn}\label{defn-dend-infty}
A $\mathrm{Dend}_\infty$-algebra is a graded vector space $A = \oplus A_i$ together with a collection
\begin{align*}
\{ \mu_{k, [r]} : A^{\otimes k} \rightarrow A |~ [r] \in C_k, 1 \leq k < \infty \}
\end{align*}
of multilinear maps with $\mathrm{deg }(\mu_{k, [r]}) = k -2 $, satisfying the following identities
\begin{align}\label{dend-inf-iden}
\sum_{i+j = n+1} \sum_{\lambda = 1}^j (-1)^{\lambda (i+1) + i (|a_1| + \cdots + |a_{\lambda -1 }|)}~& \mu_{j, R_0 (j; 1, \ldots, i, \ldots, 1)[r]} \big( a_1, \ldots, a_{\lambda -1}, \\ &\mu_{i, R_\lambda (j; 1, \ldots, i, \ldots, 1)[r]} (a_\lambda, \ldots, a_{\lambda + i -1}), a_{\lambda +i}, \ldots, a_n \big) = 0, \nonumber
\end{align}
for any  $a_1, \ldots, a_n \in A$ and $[r] \in C_n$.
\end{defn}

Thus, a $\mathrm{Dend}_\infty$-algebra consists of $k$ many $k$-ary operations $\{ \mu_{k, [r]} : A ^{\otimes k} \rightarrow A | ~ [r] \in C_k \}$ labelled by the elements of $C_k$. These maps are supposed to satisfy the identities (\ref{dend-inf-iden}). Moreover, these identities are equivalent to the identities defined in \cite[Section 13.6]{loday-val} for a $\mathrm{Dend}_\infty$-algebra. Hence our definition is equivalent to the one defined in \cite{loday-val}.

One observes that for any fixed $n \geq 1$ and $a_1, \ldots, a_n \in A$, there are $n$ many identities between them associated to each $[r] \in C_n$. Let us explicitly describe them for small $n$. For $n =1$, we have that the degree $-1$ map $\mu_1 := \mu_{1, [1]}$ satisfies $(\mu_1)^2 = 0$. In other words, $(A, \mu_1)$ is a chain complex. For $n=2$ and $[r] = [1] \in C_2$, we get
\begin{align*}
\mu_1 (\mu_{2,[1]} (a,b)) = \mu_{2,[1]} (\mu_1 (a), b) + (-1)^{|a|}~ \mu_{2,[1]} (a, \mu_1 (b)).
\end{align*}
Similarly, for $[r] = [2] \in C_2$, we get
\begin{align*}
\mu_1 (\mu_{2,[2]} (a,b)) = \mu_{2,[2]} (\mu_1 (a), b) + (-1)^{|a|}~ \mu_{2,[2]} (a, \mu_1 (b)).
\end{align*}
This shows that the differential $\mu_1$ is a graded derivation for both the products $\mu_{2, [1]}$ and $\mu_{2, [2]}$. For $n = 3$, we have three relations corresponding to each elements of $C_3 = \{ [1], [2], [3] \}$. These relations are respectively given by

\begin{align*}
&\mu_{2,[1]} \big(\mu_{2,[1]} (a,b), c \big) - \mu_{2,[1]} \big( a,~ \mu_{2,[1]} (b, c) + \mu_{2,[2]} (b, c) \big) \\
&= \big\{ \mu_1 (\mu_{3,[1]} (a, b, c)) + \mu_{3,[1]} (\mu_1(a), b, c) + (-1)^{|a|}~ \mu_{3, [1]} (a, \mu_1 (b), c) + (-1)^{|a|+|b|}~ \mu_{3, [1]} (a, b, \mu_1 (c)) \big\},
\end{align*}

\begin{align*}
&\mu_{2,[1]} (\mu_{2,[2]} (a,b) ,~ c) - \mu_{2,[2]} (a,~ \mu_{2,[1]} (b, c)) \\
&= \big\{ \mu_1 (\mu_{3,[2]} (a, b, c)) + \mu_{3,[2]} (\mu_1(a), b, c) + (-1)^{|a|}~ \mu_{3, [2]} (a, \mu_1 (b), c) + (-1)^{|a|+|b|}~ \mu_{3, [2]} (a, b, \mu_1 (c)) \big\},
\end{align*}

\noindent and

\begin{align*}
&\mu_{2,[2]} (\mu_{2,[1]} (a,b) + \mu_{2, [2]} (a,b) ,~ c ) - \mu_{2,[2]} (a,~ \mu_{2,[2]} (b, c)) \\
&= \big\{ \mu_1 (\mu_{3,[3]} (a, b, c)) + \mu_{3,[3]} (\mu_1(a), b, c) + (-1)^{|a|}~ \mu_{3, [3]} (a, \mu_1 (b), c) + (-1)^{|a|+|b|}~ \mu_{3, [3]} (a, b, \mu_1 (c)) \big\}.
\end{align*}

This shows that the degree $0$ products $\mu_{2, [1]}$ and $\mu_{2, [2]}$ does not in general satisfy the dendriform identities. However, they  satisfy the same identities up to some terms involving $\mu_{3}$'s. Similarly, for higher $n$, we get higher coherence laws that $\mu_{k,[r]}$'s must satisfy.

Any dendriform algebra is a $\text{Dend}_\infty$-algebra concentrated in degree zero. Differential graded dendriform algebras are also examples of $\text{Dend}_\infty$-algebras in which $\mu_{k, [r]} = 0$, for $k \geq 3$. These algebras arise from Rota-Baxter operator on differential graded associative algebras. More generally, in section 5, we define Rota-Baxter operator on $A_\infty$-algebras which gives rise to $\text{Dend}_\infty$-algebras.

Any $A_\infty$-algebra $(A, \mu_k)$ can also be thought as a $\text{Dend}_\infty$-algebra in which $\mu_{k, [1]}= \mu_k$ and $\mu_{k, [r]} = 0$, for $r > 1$ (alternatively, $\mu_{k, [k]} = \mu_k$ and $\mu_{k, [r]} = 0$, for $r < k$).

The direct sum of two $\text{Dend}_\infty$-algebras is again a algebra of same type. Some other examples of $\text{Dend}_\infty$-algebras can be constructed from the results of section 6 (see also Examples \ref{exm-str1}, \ref{exm-str2}).

\medskip

An equivalent definition of a $\mathrm{Dend}_\infty$-algebra can be given by a degree shift.  This definition is essential to derive a coderivation interpretation of a $\text{Dend}_\infty$-algebra.
\begin{defn}
A $\mathrm{Dend}_\infty [1]$-algebra is a graded vector space $V$ together with a collection of degree $-1$ maps $\{ \varrho_{k, [r]} : V^{\otimes k} \rightarrow V |~ [r] \in C_k, 1 \leq k < \infty \}$
satisfying the following identities:
\begin{align*}
\sum_{i+j = n+1} \sum_{\lambda = 1}^j (-1)^{ |v_1| + \cdots + |v_{\lambda -1 }|}~ \varrho_{j , R_0 (j; 1, \ldots, i, \ldots, 1)[r]} (v_1, \ldots, v_{\lambda -1}, \varrho_{i, R_\lambda (j; 1, \ldots, i, \ldots, 1)[r]} (v_\lambda, \ldots, v_{\lambda + i -1}),\\
 v_{\lambda +i}, \ldots, v_n) = 0,
\end{align*}
for all $v_1, \ldots, v_n \in V$ and $[r] \in C_n$.
\end{defn}

Let $(A, \mu_{k, [r]})$ be a $\mathrm{Dend}_\infty$-algebra. Take $V = sA$, where $V_i = (sA)_i = A_{i-1}$. For each $1 \leq k < \infty$ and $[r] \in C_k$, we define
\begin{align*}
\varrho_{k, [r]} = (-1)^{\frac{k(k-1)}{2}}~ s \circ \mu_{k, [r]} \circ (s^{-1})^{\otimes k},
\end{align*}
where $s : A \rightarrow sA$ is the degree $+1$ map and $s^{-1} : sA \rightarrow A$ is the degree $-1$ map. It turns out that $\mathrm{deg} (\mu_{k, [r]}) = -1$. Note that $\mu_{k, [r]}$ can be constructed from $\varrho_{k, [r]}$ by $\mu_{k, [r]} = s^{-1} \circ \varrho_{k, [r]} \circ  s^{\otimes k}$. The sign $(-1)^{\frac{k(k-1)}{2}}$ is a consequence of the Koszul sign convension in the graded context.
Thus, we have the following. The proof is similar to case of $A_\infty$-algebra.

\begin{prop}\label{prop-dend-dend1}
A pair $(A, \mu_{k, [r]})$ is a $\mathrm{Dend}_\infty$-algebra if and only if $(V, \varrho_{k, [r]})$ is a $\mathrm{Dend}_\infty [1]$-algebra.
\end{prop}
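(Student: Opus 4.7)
The plan is to reduce the equivalence to a purely sign-theoretic bookkeeping exercise, since the two axioms have the same underlying combinatorial structure (indexed by the maps $R_0, R_\lambda$) and differ only in the sign conventions arising from the shift $s : A \to sA$. First I would rewrite the suspension relation as
\begin{align*}
\mu_{k,[r]} = (-1)^{\frac{k(k-1)}{2}}\, s^{-1} \circ \varrho_{k,[r]} \circ s^{\otimes k},
\end{align*}
and observe that for graded elements $v_1,\dots,v_n \in V$ with $a_i = s^{-1}(v_i)$, one has $|a_i| = |v_i| - 1$. The strategy is then to substitute this expression for each $\mu_{k,[r]}$ appearing in the $\mathrm{Dend}_\infty$-identity, evaluate both sides on $v_1 \otimes \cdots \otimes v_n$, and use the Koszul sign rule to move the $s^{\pm 1}$ operators past graded elements and operators, collecting all accumulated signs.

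The key steps will be: (i) In the $\lambda$-th summand of the $\mathrm{Dend}_\infty$-identity, there are two prefactors of the form $(-1)^{\frac{k(k-1)}{2}}$, namely $(-1)^{\frac{j(j-1)}{2}}$ from the outer $\mu_{j, R_0[r]}$ and $(-1)^{\frac{i(i-1)}{2}}$ from the inner $\mu_{i, R_\lambda[r]}$. (ii) Moving $s^{\otimes n}$ through the substituted expression produces Koszul signs from two sources: the degree $+1$ maps $s$ commuting past $v_1,\dots,v_{\lambda-1}$ before being absorbed into the inner composition, and the interaction of $s^{-1}$ (degree $-1$) with the shifted output of $\varrho_{i,R_\lambda[r]}$. (iii) The term $i(|a_1|+\cdots+|a_{\lambda-1}|)$ in the original sign arises because the degree $i-2$ map $\mu_{i,...}$ is being inserted past the $v_1,\dots,v_{\lambda-1}$ slots, which after degree-shifting becomes $(-1)^{|v_1|+\cdots+|v_{\lambda-1}|}$ since $\varrho$ has degree $-1$. (iv) The combinatorial factor $(-1)^{\lambda(i+1)}$ together with the two $\frac{k(k-1)}{2}$ prefactors and the Koszul signs from shuffling $s$'s should collapse, by the identity $\frac{(i+j-1)(i+j-2)}{2} \equiv \frac{i(i-1)}{2} + \frac{j(j-1)}{2} + (i-1)(\lambda-1) + \text{const} \pmod{2}$ (summed over $i+j=n+1$), leaving exactly the sign $(-1)^{|v_1|+\cdots+|v_{\lambda-1}|}$ demanded by the $\mathrm{Dend}_\infty[1]$-axiom.

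The main obstacle is therefore the careful sign accounting of step (iv); once one verifies that the degree-shift signs produced by the Koszul rule exactly cancel the $\lambda$-dependent and $i$-dependent factors in the original $\mathrm{Dend}_\infty$-axiom, the equivalence is immediate because the labelling data $R_0(j;1,\dots,i,\dots,1)[r]$ and $R_\lambda(j;1,\dots,i,\dots,1)[r]$ is the same on both sides. Since this sign computation is structurally identical to the classical proof of the equivalence between $A_\infty$-algebras and $A_\infty[1]$-algebras (indeed, the $R_0, R_\lambda$ data plays no role in the sign calculus, affecting only which operation is applied), I would refer to the $A_\infty$ proof for the bookkeeping and indicate that the same argument applied separately for each fixed $[r] \in C_n$ yields the claim. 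This is precisely why the statement of the proposition says the proof is similar to the $A_\infty$ case.
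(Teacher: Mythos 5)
Your proposal is correct and follows essentially the same route as the paper, which itself disposes of this proposition with the single remark that ``the proof is similar to the case of $A_\infty$-algebras.'' Your key observation --- that the labels $R_0(j;1,\dots,i,\dots,1)[r]$ and $R_\lambda(j;1,\dots,i,\dots,1)[r]$ are inert under the suspension and affect only which operation is applied, so the sign bookkeeping reduces verbatim to the classical $A_\infty$ versus $A_\infty[1]$ computation carried out separately for each $[r]\in C_n$ --- is precisely the justification the paper is implicitly relying on.
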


\medskip

\subsection{Coderivation interpretation}
We give a new interpretation of a $\mathrm{Dend}_\infty$-algebra structure on a graded vector space $A$ in terms of a coderivation on the graded diassociative coalgebra on the suspension of $A$. We begin with the following.
\begin{defn}
A (graded) diassociative coalgebra is a (graded) vector space $C$ together with degree zero linear maps $\triangle_1, \triangle_2 : C \rightarrow C \otimes C$ satisfying
\begin{itemize}
\item $(\mathrm{id} \otimes \triangle_1) \triangle_1 = (\triangle_1 \otimes \mathrm{id}) \triangle_1 = (\mathrm{id} \otimes \triangle_2) \triangle_1$,
\item $(\triangle_2 \otimes \mathrm{id}) \triangle_1 = (\mathrm{id} \otimes \triangle_1) \triangle_2$,
\item $(\triangle_1 \otimes \mathrm{id}) \triangle_2 = (\mathrm{id} \otimes \triangle_2) \triangle_2 = (\triangle_2 \otimes \mathrm{id}) \triangle_2.$
\end{itemize}
\end{defn}

This notion is dual to the notion of diassociative algebra (dialgebra) introduced by Loday \cite{loday}. Moreover, it follows from the above definition that $\triangle_1 , \triangle_2$ are both associative comultiplications on $C$ satisfying additional three identities.

A coderivation on a (graded) diassociative coalgebra $(C, \triangle_1, \triangle_2)$ is a linear map $d : C \rightarrow C$ which is a coderivation for both the coproducts, i.e
\begin{align*}
\triangle_1 \circ d = (\mathrm{id} \otimes d + d \otimes \mathrm{id}) \circ \triangle_1 \quad \text{ and } \quad
\triangle_2 \circ d = (\mathrm{id} \otimes d + d \otimes \mathrm{id}) \circ \triangle_2.
\end{align*}

The set of all coderivations on $C$ is denoted by $\mathrm{Coder} (C)$.

\medskip

Let $V$ be a graded vector space. Consider the free diassociative coalgebra over $V$ given by $\mathrm{Diass}^c (V) = TV \otimes V \otimes TV$ with the coproducts

\begin{align*}
&\triangle_1 (v_1 \cdots v_n \otimes v \otimes w_1 \cdots w_m) \\
& = \sum_{i, j \geq 0, i+j +1 \leq m}  (v_1 \cdots v_n \otimes v \otimes w_1 \cdots w_i ) \otimes (w_{i+1} \cdots w_{i+j} \otimes w_{i+j+1} \otimes w_{i+j+2} \cdots w_m), \\\\
& \triangle_2 (v_1 \cdots v_n \otimes v \otimes w_1 \cdots w_m) \\
& = \sum_{i, j \geq 0, i+j+1 \leq n} (v_1 \cdots v_i \otimes v_{i+1} \otimes v_{i+2} \cdots v_{i+j+1}) \otimes (v_{i+j+2} \cdots v_n \otimes v \otimes w_1 \cdots w_m).
\end{align*}

Let $\varrho_{k, [1]}, \ldots, \varrho_{k, [k]} : V^{\otimes k} \rightarrow V$ be a sequence of $k$ many $k$-ary multilinear maps (of degree $s$). Consider a linear map $\varrho_k : TV \otimes V \otimes TV \rightarrow V$ which is non-zero only on $\oplus_{i+j = k+1, i, j \geq 1} V^{\otimes i-1} \otimes V \otimes V^{\otimes j-1}$ and is given by
\begin{align*}
\varrho_k (1 \otimes v_1 \otimes v_2 \cdots v_k ) =~& \varrho_{k, [1]} (v_1, \ldots, v_k), \\
\varrho_k (v_1 \otimes v_2 \otimes v_3 \cdots v_k) =~& \varrho_{k, [2]} (v_1, \ldots, v_k), \\
 \vdots ~&  \\
\varrho_k (v_1 \cdots v_{k-1} \otimes v_k \otimes 1) =~& \varrho_{k, [k]}  (v_1, \ldots, v_k).
\end{align*}

Then $\varrho_k$ extends to a coderivation $\widetilde{\varrho_k} : TV \otimes V \otimes TV \rightarrow TV \otimes V \otimes TV$ of degree $s$ of the free diassociative coalgebra $\mathrm{Diass}^c (V) = TV \otimes V \otimes TV$. More precisely, $\widetilde{\varrho_k}$ is given by
\begin{align*}
&\widetilde{\varrho_k} (v_1 \cdots v_n \otimes v \otimes w_1 \cdots w_m)  \\
& = \sum_{(1)} (-1)^{|v_1| + \cdots + |v_l|}~ v_1 \ldots v_l \varrho_k (v_{l+1} \cdots v_{l + i-1} \otimes v_{l+i} \otimes v_{l+i+1} \cdots v_{l+i+j-1}) \cdots v_n \otimes v \otimes w_1 \cdots w_m \\
& + \sum_{(2)} (-1)^{|v_1| + \cdots + |v_{n-i+1}|}~ v_1 \cdots v_{n-i+1} \otimes \varrho_k (v_{n-i+2} \cdots v_n \otimes v \otimes w_1 \cdots w_{j-1}) \otimes w_j \cdots w_m\\
& + \sum_{(3)} (-1)^{|v_1| + \cdots + |w_l|}~ v_1 \cdots v_n  \otimes v \otimes w_1 \cdots w_l \varrho_k (w_{l+1} \cdots w_{l + i-1} \otimes w_{l+i} \otimes w_{l+i+1} \cdots w_{l+i+j-1}) \cdots w_n,
\end{align*}
where the index sets are given by
\begin{align*}
(1) =~& \{ l \geq 0;~ i, j \geq 1 \text{ with } i+j = k+1;~ l+i+j-1 \leq n \},\\
(2) =~& \{ n-i+1 \geq 0;~ i, j \geq 1 \text{ with } i+j = k+1;~ j-1 \leq m \},\\
(3) =~& \{ l \geq 0; ~ i, j \geq 1 \text{ with } i+j = k+1;~ l+i+j-1 \leq m \}.
\end{align*}

The following lemma is useful to prove the next theorem.
\begin{lemma}\label{coder-lemma}
For any fixed $i, j \geq 1$ with $i+j = n+1$, and $1 \leq r \leq n,$ we have
\begin{align*}
&\widetilde{\varrho_j} \circ \widetilde{\varrho_i} (v_1 \cdots v_{r-1} \otimes v_r \otimes v_{r+1} \cdots v_n) \\
&= \sum_{\lambda = 1}^j (-1)^{|v_1| + \cdots + |v_{\lambda -1}|} ~ \varrho_{j, R_0 (j; 1, \ldots, i, \ldots, 1) [r]} (v_1, \ldots, v_{\lambda -1}, \varrho_{i, R_\lambda (j; 1, \ldots, i, \ldots, 1) [r]} (v_\lambda, \ldots, v_{\lambda +i -1}) , v_{\lambda +i}, \ldots, v_n).
\end{align*}
\end{lemma}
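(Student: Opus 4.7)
The plan is to compute $\widetilde{\varrho_j} \circ \widetilde{\varrho_i}$ directly from the explicit three-line formula for the coderivation extension on $\mathrm{Diass}^c(V)$. Applying $\widetilde{\varrho_i}$ to the starting element $v_1 \cdots v_{r-1} \otimes v_r \otimes v_{r+1} \cdots v_n$ produces summands of three types according to where $\varrho_i$ acts: (I) entirely inside the left tensor factor $v_1 \cdots v_{r-1}$, (II) straddling the middle element $v_r$, and (III) entirely inside the right tensor factor $v_{r+1} \cdots v_n$. Each resulting intermediate term lies in $TV \otimes V \otimes TV$ and has total tensor-length equal to $n - i + 1 = j$.

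The crucial observation for the second application is that when $\widetilde{\varrho_j}$ is applied to an intermediate term of total length $j$, only the middle-straddling piece of its coderivation formula can survive: the other two would require the left or the right factor alone to contain at least $j$ entries, which is impossible since the middle already uses one of the $j$ slots. Furthermore, since the final output has total length $1$ and therefore lies in $\mathbb{K} \otimes V \otimes \mathbb{K} \cong V$, the middle-straddling piece must fully absorb the intermediate element; its decomposition parameters $(a,b)$ are then pinned down uniquely by the summation constraints, and the accompanying sign $(-1)^{|\tilde{v}_1|+\cdots+|\tilde{v}_0|}$ reduces to an empty sum, contributing $+1$. Hence the net effect of $\widetilde{\varrho_j}$ on such an intermediate element is simply $\varrho_{j,[r']}$ applied to the entire sequence of its constituents, where $r'$ is the middle position of the intermediate element.

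To finish, I match the three families with the three branches of $R_0$ and $R_\lambda$. Parametrising each summand by the index $\lambda$ such that the inner $\varrho_i$ consumes $v_\lambda, \ldots, v_{\lambda+i-1}$, family (I) gives $\lambda \in \{1, \ldots, r-i\}$ with intermediate middle position $r-i+1$ and with the inner middle index free over $\{1,\ldots,i\}$; these $i$ terms assemble by linearity into $\varrho_{i,[1]+\cdots+[i]}$, matching $R_0[r]=[r-i+1]$ and $R_\lambda[r]=[1]+\cdots+[i]$. Family (II) gives $\lambda \in \{r-i+1, \ldots, r\}$ with intermediate middle $\lambda$ and forced inner middle $r-\lambda+1$, matching the straddling branches of $R_0$ and $R_\lambda$. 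Family (III) gives $\lambda \in \{r+1, \ldots, j\}$ with intermediate middle $r$ and unconstrained inner middle, again matching the relevant branches. The Koszul sign from each piece of $\widetilde{\varrho_i}$ is $(-1)^{|v_1|+\cdots+|v_l|}$, which becomes $(-1)^{|v_1|+\cdots+|v_{\lambda-1}|}$ once $l$ is re-expressed in terms of $\lambda$; together with the trivial $+1$ contributed by $\widetilde{\varrho_j}$, this matches the sign on the right-hand side. The main obstacle is the combinatorial bookkeeping, particularly in family (II) where one must track the inner middle index $a = r-\lambda+1$ and verify that the three $\lambda$-ranges tile $\{1, \ldots, j\}$ with exactly the boundaries dictated by the summation constraints of $\widetilde{\varrho_i}$.
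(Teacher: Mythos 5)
Your proposal is correct and follows essentially the same route as the paper: the same three-way split according to whether the inner $\varrho_i$ acts before, across, or after the middle element, the same branch-by-branch matching with $R_0$ and $R_\lambda$, and the same key observation that $\widetilde{\varrho_j}$ on an intermediate element of total length $j$ collapses to $\varrho_j$ with trivial sign. The only difference is direction (you expand the left-hand side, the paper reassembles the right-hand side), and you spell out the length-counting argument for the collapse of $\widetilde{\varrho_j}$ that the paper leaves implicit.
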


\begin{proof}
We start with the right hand side. Observe that
\begin{align*}
\sum_{\lambda =1}^j = \sum_{\lambda \leq r-i} + \sum_{r - i +1 \leq \lambda \leq r} +  \sum_{\lambda -1 \geq r}.
\end{align*} 
Note that
\begin{align}\label{ident-1}
&\sum_{\lambda \leq r-i} \pm ~\varrho_{j, R_0 (j; 1, \ldots, i, \ldots, 1) [r]} (v_1, \ldots, v_{\lambda -1}, \varrho_{i, R_\lambda (j; 1, \ldots, i, \ldots, 1) [r]} (v_\lambda, \ldots, v_{\lambda +i -1}) , v_{\lambda +i}, \ldots, v_n) \nonumber \\
&= \sum_{\lambda \leq r-i} \pm ~\varrho_{j, [r-i+1]} (v_1, \ldots, v_{\lambda -1}, \varrho_{i,[1]+ \cdots + [i]} (v_\lambda, \ldots, v_{\lambda +i -1}) , v_{\lambda +i}, \ldots, v_n) \nonumber \\
&= \sum_{\lambda \leq r-i} \pm ~\varrho_{j} (v_1 \cdots v_{\lambda -1} \varrho_{i,[1]+ \cdots + [i]} (v_\lambda, \ldots, v_{\lambda +i -1})  v_{\lambda +i} \cdots \otimes v_r \otimes v_{r+1} \cdots v_n).
\end{align}
Similarly,
\begin{align}\label{ident-2}
&\sum_{r-i+1 \leq \lambda \leq r} \pm~ \varrho_{j, R_0 (j; 1, \ldots, i, \ldots, 1) [r]} (v_1, \ldots, v_{\lambda -1}, \varrho_{i, R_\lambda (j; 1, \ldots, i, \ldots, 1) [r]} (v_\lambda, \ldots, v_{\lambda +i -1}) , v_{\lambda +i}, \ldots, v_n) \nonumber \\
&= \sum_{r-i+1 \leq \lambda \leq r} \pm~ \varrho_{j, [\lambda]} (v_1, \ldots, v_{\lambda -1}, \varrho_{i, [r-(\lambda -1)]} (v_\lambda, \ldots, v_{\lambda +i -1}) , v_{\lambda +i}, \ldots, v_n) \nonumber \\
&=  \sum_{r-i+1 \leq \lambda \leq r} \pm~ \varrho_{j} (v_1 \cdots v_{\lambda -1} \otimes \varrho_{i} (v_\lambda \cdots v_{r-1} \otimes v_r \otimes v_{r+1} \cdots v_{\lambda +i -1}) \otimes v_{\lambda +i} \cdots v_n)
\end{align}
and
\begin{align}\label{ident-3}
&\sum_{\lambda -1 \geq r} \pm ~ \varrho_{j, R_0 (j; 1, \ldots, i, \ldots, 1) [r]} (v_1, \ldots, v_{\lambda -1}, \varrho_{i, R_\lambda (j; 1, \ldots, i, \ldots, 1) [r]} (v_\lambda, \ldots, v_{\lambda +i -1}) , v_{\lambda +i}, \ldots, v_n) \nonumber \\
&= \sum_{\lambda -1 \geq r} \pm ~ \varrho_{j, [r]} (v_1, \ldots, v_{\lambda -1}, \varrho_{i,[1]+ \cdots [i]} (v_\lambda, \ldots, v_{\lambda +i -1}) , v_{\lambda +i}, \ldots, v_n) \nonumber \\
&= \sum_{\lambda -1 \geq r} \pm ~ \varrho_{j} (v_1 \cdots v_{r-1} \otimes v_r \otimes \cdots v_{\lambda -1} \varrho_{i,[1]+ \cdots [i]} (v_\lambda, \ldots, v_{\lambda +i -1}) v_{\lambda +i} \cdots v_n).
\end{align}
Hence, by adding (\ref{ident-1}), (\ref{ident-2}), (\ref{ident-3}) and using the definition of $\widetilde{\varrho_i}$, we get
\begin{align*}
&\sum_{\lambda = 1}^j \pm ~\varrho_{j, R_0 (j; 1, \ldots, i, \ldots, 1) [r]} (v_1, \ldots, v_{\lambda -1}, \varrho_{i, R_\lambda (j; 1, \ldots, i, \ldots, 1) [r]} (v_\lambda, \ldots, v_{\lambda +i -1}) , v_{\lambda +i}, \ldots, v_n)\\
&= \varrho_j \circ \widetilde{\varrho_i} (v_1 \cdots v_{r-1} \otimes v_r \otimes v_{r+1} \cdots v_n) \\
&= \widetilde{\varrho_j} \circ \widetilde{\varrho_i} (v_1 \cdots v_{r-1} \otimes v_r \otimes v_{r+1} \cdots v_n)  \quad (\text{since } i+j = n+1).
\end{align*}
\end{proof}

\begin{thm}\label{thm-dend1-coder}
Let $V$ be a graded vector space and $\{ \varrho_{k, [r]} : V^{\otimes k} \rightarrow V |~ [r] \in C_k, 1 \leq k < \infty \}$ be a sequence of maps of degree $-1$. Consider the coderivation $D = \sum_{k \geq 1} \widetilde{\varrho_k} \in \mathrm{Coder}^{-1} (\mathrm{Diass}^c (V))$. Then $(V, \varrho_{k, [r]})$ is a $\mathrm{Dend}_\infty [1]$-algebra if and only if $D \circ D = 0$.
\end{thm}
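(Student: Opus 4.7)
The plan is to reduce the square-zero condition on $D$ to a statement about its projection onto cogenerators and then read off the $\mathrm{Dend}_\infty[1]$ identities from Lemma \ref{coder-lemma}. Because $D$ has odd degree $-1$, we have $D \circ D = \tfrac{1}{2}[D, D]$, and since the graded commutator of two coderivations on any coalgebra is again a coderivation, $D \circ D$ is itself a coderivation of degree $-2$ on $\mathrm{Diass}^c(V)$. Invoking the cofreeness of $\mathrm{Diass}^c(V) = TV \otimes V \otimes TV$ over $V$, any coderivation is determined by its corestriction to the cogenerators; hence $D \circ D = 0$ iff $\pi \circ D \circ D = 0$, where $\pi : TV \otimes V \otimes TV \to V$ is the projection picking out the middle entry when both flanking tensor factors are trivial.

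Next, I would evaluate $\pi \circ D \circ D$ on a generic element $v_1 \cdots v_{r-1} \otimes v_r \otimes v_{r+1} \cdots v_n$ of $\mathrm{Diass}^c(V)$. A direct inspection of the three-term formula defining $\widetilde{\varrho_k}$ shows that $\widetilde{\varrho_k}$ decreases the total length (the number of tensor factors in the element, counting the middle slot once) by exactly $k-1$. Starting from length $n$, only the compositions $\widetilde{\varrho_j} \circ \widetilde{\varrho_i}$ with $i + j = n + 1$ can therefore contribute to the component in $V$. For precisely these pairs $(i,j)$, Lemma \ref{coder-lemma} identifies
\[
\widetilde{\varrho_j}\circ\widetilde{\varrho_i}\bigl(v_1\cdots v_{r-1}\otimes v_r\otimes v_{r+1}\cdots v_n\bigr) = \sum_{\lambda=1}^{j}(-1)^{|v_1|+\cdots+|v_{\lambda-1}|}\,\varrho_{j,\,R_0(j;1,\ldots,i,\ldots,1)[r]}\bigl(\ldots,\varrho_{i,\,R_\lambda(j;1,\ldots,i,\ldots,1)[r]}(v_\lambda,\ldots,v_{\lambda+i-1}),\ldots\bigr),
\]
and summing over $i+j=n+1$ gives exactly the left-hand side of the defining identity of a $\mathrm{Dend}_\infty[1]$-algebra at arity $n$ and label $[r]\in C_n$.

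Combining the two steps: $D\circ D = 0$ holds iff the displayed sum vanishes for every $n\ge 1$, every $[r]\in C_n$ and every choice of $v_1,\ldots,v_n\in V$, which is precisely the defining axiom of a $\mathrm{Dend}_\infty[1]$-algebra on $V$. The only substantive obstacle I anticipate is keeping track of the Koszul signs produced by the extension of $\varrho_k$ to the coderivation $\widetilde{\varrho_k}$ on the two flanking tensor algebras, and checking that they reproduce the sign $(-1)^{|v_1|+\cdots+|v_{\lambda-1}|}$ required by the $\mathrm{Dend}_\infty[1]$ identity; this bookkeeping is already contained in the proof of Lemma \ref{coder-lemma}, so the main theorem follows by assembling these pieces. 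Should one prefer to bypass the cofreeness appeal, the same argument can be carried out termwise on each summand $V^{\otimes n}\otimes V\otimes V^{\otimes m}$ of $\mathrm{Diass}^c(V)$, using the length count to isolate the contributing pairs and Lemma \ref{coder-lemma} to evaluate them.
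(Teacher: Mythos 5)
Your proposal is correct and follows essentially the same route as the paper: decompose $D\circ D$ into its length-homogeneous pieces $\sum_{i+j=n+1}\widetilde{\varrho_j}\circ\widetilde{\varrho_i}$ and identify each piece with the arity-$n$ $\mathrm{Dend}_\infty[1]$ identity via Lemma \ref{coder-lemma}. Your explicit appeal to the coderivation property of $D\circ D$ and the cofreeness of $\mathrm{Diass}^c(V)$ (so that vanishing of the corestriction to $V$ suffices) is a welcome tightening of a step the paper leaves implicit, since Lemma \ref{coder-lemma} only evaluates these operators on inputs of length exactly $n$.
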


\begin{proof}
Note that, the condition $D \circ D = 0$ is same as
\begin{align*}
(\widetilde{\varrho_1} +  \widetilde{\varrho_2} + \widetilde{\varrho_3} + \cdots ) \circ (\widetilde{\varrho_1} +  \widetilde{\varrho_2} + \widetilde{\varrho_3} + \cdots ) = 0.
\end{align*}
This is equivalent to a system of relations: for all $n \geq 1$,
\begin{align*}
\widetilde{\varrho_1} \circ \widetilde{\varrho_n} + \widetilde{\varrho_2} \circ \widetilde{\varrho_{n-1}} + \cdots + \widetilde{\varrho_{n-1}} \circ \widetilde{\varrho_2} + \widetilde{\varrho_n} \circ \widetilde{\varrho_1} = 0,
\end{align*}
or, equivalently, $\sum_{i+j = n+1, i, j \geq 1} \widetilde{\varrho_j} \circ \widetilde{\varrho_i} = 0.$ Hence the result follows from Lemma \ref{coder-lemma}.
\end{proof}

In view of Proposition \ref{prop-dend-dend1} and Theorem \ref{thm-dend1-coder}, we get the following.

\begin{thm}\label{thm-dend-coder}
A $\mathrm{Dend}_\infty$-algebra structure on a graded vector space $A$ is equivalent to a degree $-1$ coderivation $D \in \mathrm{Coder}^{-1} (\mathrm{Diass}^c (V))$ with $D \circ D = 0$, where $V = sA$.
\end{thm}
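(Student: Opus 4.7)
The plan is to obtain the theorem as an immediate corollary by composing the two equivalences already established in the section. By Proposition \ref{prop-dend-dend1}, specifying a $\mathrm{Dend}_\infty$-algebra structure $\{\mu_{k,[r]}\}$ on the graded vector space $A$ is the same as specifying a $\mathrm{Dend}_\infty[1]$-algebra structure $\{\varrho_{k,[r]}\}$ on $V = sA$, via the explicit degree-shift dictionary $\varrho_{k,[r]} = (-1)^{k(k-1)/2}\, s \circ \mu_{k,[r]} \circ (s^{-1})^{\otimes k}$. By Theorem \ref{thm-dend1-coder}, giving such a $\mathrm{Dend}_\infty[1]$-structure on $V$ is equivalent to giving a degree $-1$ coderivation $D \in \mathrm{Coder}^{-1}(\mathrm{Diass}^c(V))$ with $D \circ D = 0$, where the $\varrho_{k,[r]}$'s are packaged into maps $\varrho_k : TV \otimes V \otimes TV \to V$ supported on the weight-$k$ pieces, and $D = \sum_{k \geq 1} \widetilde{\varrho_k}$ is the resulting coderivation on the cofree diassociative coalgebra.

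Composing these two bijections gives the desired statement. Starting from $\{\mu_{k,[r]}\}$, one shifts to $\{\varrho_{k,[r]}\}$, assembles the $\varrho_k$'s, and extends to the coderivation $D$; the codifferential condition $D \circ D = 0$ decomposes into the relations $\sum_{i+j=n+1}\widetilde{\varrho_j}\circ\widetilde{\varrho_i}=0$, which by Lemma \ref{coder-lemma} are precisely the defining $\mathrm{Dend}_\infty[1]$-identities on $V$, and hence, by the shift, the $\mathrm{Dend}_\infty$-identities on $A$. Conversely, any $D$ as above can be read off componentwise to reconstruct the $\varrho_{k,[r]}$'s and then desuspended via $\mu_{k,[r]} = s^{-1} \circ \varrho_{k,[r]} \circ s^{\otimes k}$.

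I do not expect any substantive obstacle: the combinatorial core, namely the matching between iterated partial compositions $\widetilde{\varrho_j}\circ\widetilde{\varrho_i}$ and the expressions involving $R_0(j;1,\ldots,i,\ldots,1)$ and $R_\lambda(j;1,\ldots,i,\ldots,1)$, has already been discharged in Lemma \ref{coder-lemma} inside the proof of Theorem \ref{thm-dend1-coder}. The only point worth monitoring is that the Koszul sign $(-1)^{k(k-1)/2}$ appearing in the shift is already calibrated (as in the classical $A_\infty$ case) so that the degree $k-2$ maps $\mu_{k,[r]}$ correspond to the degree $-1$ maps $\varrho_{k,[r]}$ in a sign-compatible way, which is exactly what Proposition \ref{prop-dend-dend1} verifies. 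The proof therefore reduces to the one-line argument that the theorem follows by composing the bijections of Proposition \ref{prop-dend-dend1} and Theorem \ref{thm-dend1-coder}.
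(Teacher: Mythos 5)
Your proposal is correct and is exactly how the paper obtains this theorem: the statement is presented as an immediate consequence of composing Proposition \ref{prop-dend-dend1} with Theorem \ref{thm-dend1-coder}, with no additional argument. Your remarks about the sign calibration and the role of Lemma \ref{coder-lemma} are accurate but not needed beyond what those two results already provide.
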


Thus, we obtain a new interpretation of a $\text{Dend}_\infty$-algebra structure on $A$ in terms of a coderivation on the graded free diassociative coalgebra on $sA$. This interpretation allows one define cohomology and deformation theory of $\text{Dend}_\infty$-algebras.

\medskip

\subsection{Relation with $A_\infty$-algebras}
Like a dendriform algebra gives rise to an associative algebra, any $\mathrm{Dend}_\infty$-algebra gives rise to an $A_\infty$-algebra. This can be stated as follows.

\begin{thm}\label{split-a-inf}
Let $(A, \mu_{k, [r]})$ be a $\mathrm{Dend}_\infty$-algebra. Then $(A, \mu_k)$ is an $A_\infty$-algebra where
\begin{align*}
\mu_k := \mu_{k, [1]} + \mu_{k, [2]} + \cdots + \mu_{k, [k]}, ~~~ \mathrm{ for }~~~ 1 \leq k < \infty.
\end{align*}
\end{thm}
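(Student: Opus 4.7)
The plan is to derive the $A_\infty$ relations for the collection $\{\mu_k\}$ by summing the $\mathrm{Dend}_\infty$ relations (\ref{dend-inf-iden}) over all $[r] \in C_n$. First I would note the degree check: since each $\mu_{k,[r]}$ has degree $k-2$, so does $\mu_k = \sum_{[s] \in C_k} \mu_{k,[s]}$, matching the degree required for an $A_\infty$-algebra.

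Next, I fix $n \geq 1$ and $a_1, \ldots, a_n \in A$, and sum equation (\ref{dend-inf-iden}) over all $[r] \in C_n$. The sign $(-1)^{\lambda(i+1) + i(|a_1|+\cdots+|a_{\lambda-1}|)}$ depends only on $i,j,\lambda$, not on $[r]$, so it factors out. What remains is to show that, for each fixed decomposition $i+j = n+1$ and each $\lambda \in \{1,\ldots,j\}$,
\[
\sum_{[r] \in C_n} \mu_{j,\, R_0(j;1,\ldots,i,\ldots,1)[r]}\bigl(a_1,\ldots,\mu_{i,\, R_\lambda(j;1,\ldots,i,\ldots,1)[r]}(a_\lambda,\ldots,a_{\lambda+i-1}),\ldots,a_n\bigr)
\]
equals $\mu_j(a_1,\ldots,\mu_i(a_\lambda,\ldots,a_{\lambda+i-1}),\ldots,a_n)$, which by multilinearity expands as $\sum_{[s]\in C_j}\sum_{[t]\in C_i} \mu_{j,[s]}(a_1,\ldots,\mu_{i,[t]}(\ldots),\ldots,a_n)$.

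To verify this, I would split the sum over $r \in \{1,\ldots,n\}$ according to the three cases in the definitions of $R_0$ and $R_\lambda$. In the range $r \leq \lambda - 1$, one has $R_0([r]) = [r]$ and $R_\lambda([r]) = [1]+\cdots+[i]$, producing the terms $\sum_{s=1}^{\lambda-1}\mu_{j,[s]}(a_1,\ldots,\mu_i(a_\lambda,\ldots),\ldots)$. In the middle range $\lambda \leq r \leq \lambda + i - 1$, one has $R_0([r]) = [\lambda]$ (constant) while $R_\lambda([r]) = [r-\lambda+1]$ sweeps through $[1],\ldots,[i]$, producing $\mu_{j,[\lambda]}(a_1,\ldots,\sum_{t=1}^i \mu_{i,[t]}(a_\lambda,\ldots),\ldots) = \mu_{j,[\lambda]}(a_1,\ldots,\mu_i(a_\lambda,\ldots),\ldots)$. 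In the range $\lambda + i \leq r \leq n$, the shift $R_0([r]) = [r-i+1]$ sweeps through $[\lambda+1],\ldots,[j]$, with $R_\lambda([r]) = [1]+\cdots+[i]$, producing $\sum_{s=\lambda+1}^{j}\mu_{j,[s]}(a_1,\ldots,\mu_i(a_\lambda,\ldots),\ldots)$. Adding the three contributions telescopes into $\sum_{s=1}^{j}\mu_{j,[s]}(a_1,\ldots,\mu_i(a_\lambda,\ldots),\ldots) = \mu_j(a_1,\ldots,\mu_i(a_\lambda,\ldots),\ldots)$, as required.

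Combining these equalities with the factored-out sign gives exactly the $n$-th $A_\infty$ relation for $\{\mu_k\}$, completing the proof. The main (purely combinatorial) obstacle is the bookkeeping in splitting the sum over $[r] \in C_n$ into the three ranges and matching them against the decomposition of $\mu_j$ and $\mu_i$; once the definitions of $R_0$ and $R_\lambda$ are unpacked, the three partial sums fit together cleanly because the ranges $\{1,\ldots,\lambda-1\}$, $\{\lambda\}$, and $\{\lambda+1,\ldots,j\}$ partition $C_j$.
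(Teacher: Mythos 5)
Your proposal is correct and follows essentially the same route as the paper's proof: sum the $\mathrm{Dend}_\infty$ identity over $[r]\in C_n$, split the sum over $r$ into the three ranges $r\leq\lambda-1$, $\lambda\leq r\leq\lambda+i-1$, $\lambda+i\leq r\leq n$ dictated by the definitions of $R_0$ and $R_\lambda$, and observe that the three partial sums reassemble into $\mu_j(a_1,\ldots,\mu_i(a_\lambda,\ldots,a_{\lambda+i-1}),\ldots,a_n)$. The observation that the sign is independent of $[r]$ and hence factors out is exactly the implicit step in the paper's computation.
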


\begin{proof}
Since $(A, \mu_{k, [r]})$ is a $\mathrm{Dend}_\infty$-algebra, for any $n \geq 1$ and $1 \leq r \leq n$, we have
\begin{align*}
\sum_{i+j = n+1} \sum_{\lambda =1}^j \pm~ \mu_{j, R_0 (j ; 1, \ldots, i, \ldots, 1)[r]} \big( a_1, \ldots, a_{\lambda -1} , \mu_{i, R_\lambda (j ; 1, \ldots, i, \ldots, 1)[r]} (a_\lambda, \ldots, a_{\lambda + i -1}), a_{\lambda + i}, \ldots, a_n \big) = 0.
\end{align*}
Adding these relations for $r = 1, 2, \ldots, n$, we get
\begin{align}\label{r=1}
\sum_{i+j = n+1} \sum_{\lambda =1}^j \pm~ \bigg( \sum_{r = 1}^n~ \mu_{j, R_0 (j ; 1, \ldots, i, \ldots, 1)[r]} \big( a_1, \ldots, a_{\lambda -1} , \mu_{i, R_\lambda (j ; 1, \ldots, i, \ldots, 1)[r]} (a_\lambda, \ldots, a_{\lambda + i -1}),\\
 a_{\lambda + i}, \ldots, a_n \big) \bigg) = 0. \nonumber
\end{align}
For any fixed $i, j$ and $\lambda$, we may write
\begin{align*}
\sum_{r=1}^n = \sum_{r = 1}^{\lambda -1} + \sum_{r = \lambda}^{\lambda + i -1} + \sum_{r = \lambda + i}^n.
\end{align*}
One observes that
\begin{align}\label{a-inf-1}
&\sum_{r = 1}^{\lambda -1} ~\mu_{j, R_0 (j ; 1, \ldots, i, \ldots, 1)[r]} \big( a_1, \ldots, a_{\lambda -1} , \mu_{i, R_\lambda (j ; 1, \ldots, i, \ldots, 1)[r]} (a_\lambda, \ldots, a_{\lambda + i -1}), a_{\lambda + i}, \ldots, a_n \big) \nonumber \\
&= (\mu_{j, [1]} + \cdots + \mu_{j, [\lambda -1]}) (a_1, \ldots, a_{\lambda -1}, \mu_i (a_\lambda, \ldots, a_{\lambda + i -1}), a_{\lambda + i}, \ldots, a_n).
\end{align}
Similarly,
\begin{align}\label{a-inf-2}
&\sum_{r = \lambda}^{\lambda + i -1} \mu_{j, R_0 (j ; 1, \ldots, i, \ldots, 1)[r]} \big( a_1, \ldots, a_{\lambda -1} , \mu_{i, R_\lambda (j ; 1, \ldots, i, \ldots, 1)[r]} (a_\lambda, \ldots, a_{\lambda + i -1}), a_{\lambda + i}, \ldots, a_n \big) \nonumber\\
&= \mu_{j, [\lambda]} (a_1, \ldots, a_{\lambda -1}, (\mu_{i,[1]} + \cdots + \mu_{i, [i]}) (a_\lambda , \ldots, a_{\lambda + i -1}), a_{\lambda + i}, \ldots,  a_n ) \nonumber\\
&= \mu_{j, [\lambda]} (a_1, \ldots, a_{\lambda -1}, \mu_{i} (a_\lambda , \ldots, a_{\lambda + i -1}), a_{\lambda + i}, \ldots,  a_n )
\end{align}
and 
\begin{align}\label{a-inf-3}
&\sum_{r = \lambda + i}^n ~\mu_{j, R_0 (j ; 1, \ldots, i, \ldots, 1)[r]} \big( a_1, \ldots, a_{\lambda -1} , \mu_{i, R_\lambda (j ; 1, \ldots, i, \ldots, 1)[r]} (a_\lambda, \ldots, a_{\lambda + i -1}), a_{\lambda + i}, \ldots, a_n \big) \nonumber \\
&= (\mu_{j, [\lambda +1]} + \cdots + \mu_{j, [j]} ) (a_1, \ldots, a_{\lambda -1}, \mu_i (a_\lambda, \ldots, a_{\lambda + i -1}), a_{\lambda + i}, \ldots, a_n).
\end{align}
By adding (\ref{a-inf-1}) , (\ref{a-inf-2}) and (\ref{a-inf-3}), we get
\begin{align*}
\sum_{r = 1}^n =~ \mu_j (a_1, \ldots, a_{\lambda -1}, \mu_i (a_\lambda, \ldots, a_{\lambda + i -1}), a_{\lambda + i}, \ldots, a_n).
\end{align*}
Hence it follows from (\ref{r=1}) that $(A, \mu_k)$ is an $A_\infty$-algebra.
\end{proof}

\begin{remark}
It follows from the above theorem that a $\text{Dend}_\infty$-algebra is a special type of $A_\infty$-algebra whose $k$-ary operation splits into $k$ many operations and the $A_\infty$-identities also split into the same way. It should be mentioned that the splitting of an operad was introduced in \cite{pei-bai-guo}. They also showed that $\text{Dend}_\infty$-operad is a splitting of an $A_\infty$-operad. Hence they also get the same result as of Theorem \ref{split-a-inf} while using the $\text{Dend}_\infty$-algebra as defined in \cite{loday-val}. One may find that our proof is simple to understand. However, a relevant question in this direction could be to split the Stasheff polytopes (or associahedra) which is a nice geometric example of an $A_\infty$-operad. We hope to explore this in recent future.
\end{remark}

%
%

\medskip

\section{Rota-Baxter operator on $A_\infty$-algebras}
It is known that a Rota-Baxter operator on an associative algebra gives rise to a dendriform algebra in a natural way. In this section, we prove a homotopy version of this result by introducing Rota-Baxter operator on $A_\infty$-algebras.

\begin{defn}
Let $(A, \mu_k)$ be an $A_\infty$-algebra. A Rota-Baxter operator on it is given by a degree zero map $R: A \rightarrow A$ satisfying
\begin{align}\label{rota-defn}
\mu_k \big( R(a_1), \ldots, R (a_k) \big) = R \big( ~\sum_{i=1}^k \mu_k (R(a_1), R(a_2),\ldots, a_i, \ldots, R (a_k))  ~ \big), ~~~ \text{ for all } k \geq 1.
\end{align}
\end{defn}

Like classical terminology, the map $R$ is called a Rota-Baxter operator of weight zero. It follows that Rota-Baxter operators on associative algebras are basic examples.

Let $(A,m, R)$ be a Rota-Baxter associative algebra. A module over it consists of a pair $(M, R_M)$ in which $M$ is a $A$-representation and $R_M : M \rightarrow M$ is a linear map satisfying
\begin{align*}
R(a) \cdot R_M (m_1) = R_M (a \cdot R_M (m_1) + R(a) \cdot m_1) ~~ , ~~ R_M (m_1) \cdot R(a) = R_M (m_1 \cdot R(a) + R_M (m_1) \cdot a),
\end{align*}
for all $a \in A$ and $m_1 \in M$.
A morphism between two modules $(M, R_M)$ and $(N, R_N)$ is given by a morphism $d : M \rightarrow N$ of $A$-representations such that $R_N \circ d = d \circ R_M$.

Let $(A,m,  R)$ be a Rota-Baxter associative algebra and $d : (N, R_N) \rightarrow (M, R_M)$ be a morphism between modules. We consider the $2$-term chain complex $N \xrightarrow{d} A \oplus M$ with the following $A_\infty$-structure
\begin{align*}
\mu_2 ((a,m_1), (b, m_2)) =~& (m(a,b),~ a \cdot m_2 + m_1 \cdot b),\\
\mu_2 ((a, m_1), n_1) =~& a \cdot n_1, \\
\mu_2 (n_1, (a, m_1)) =~& n_1 \cdot a,\\
\mu_3 =~& 0,
\end{align*}
for $(a, m_1), (b, m_2) \in A \oplus M$ and $n_1 \in N$. It can be easily seen that the maps $\overline{R} : A \oplus M \rightarrow  A \oplus M$, ~ $(a, m_1) \mapsto (R(a), R_M (m_1))$ and $\overline{R}: N \rightarrow N$, $n \mapsto R_N (n)$ defines a Rota-Baxter operator on the above $A_\infty$-algebra.

\begin{thm}\label{rota-inf}
Let $(A, \mu_k)$ be an $A_\infty$-algebra and $R$ be a Rota-Baxter operator on it. Then $(A, \mu_{k, [r]})$ is a $\mathrm{Dend}_\infty$-algebra, where
\begin{align*}
\mu_{k, [r]} (a_1, \ldots, a_k) = \mu_k (R(a_1), R(a_2), \ldots, a_r, \ldots, R (a_k)),
\end{align*}
for all $k \geq 1$ and $1  \leq r \leq k$.
\end{thm}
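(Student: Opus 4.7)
The plan is to verify the $\mathrm{Dend}_\infty$-identity (\ref{dend-inf-iden}) for each fixed $[r] \in C_n$ by showing that the whole sum reduces to the ordinary $A_\infty$-identity applied to the tuple $(b_1,\ldots,b_n)$, where $b_r := a_r$ and $b_s := R(a_s)$ for $s \neq r$. Since $R$ has degree zero we have $|b_s| = |a_s|$, so the sign $(-1)^{\lambda(i+1)+i(|a_1|+\cdots+|a_{\lambda-1}|)}$ appearing in (\ref{dend-inf-iden}) matches exactly the $A_\infty$ sign $(-1)^{\lambda(i+1)+i(|b_1|+\cdots+|b_{\lambda-1}|)}$; so sign bookkeeping is automatic once the underlying operations align.

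To carry this out, I would fix $r$, $i$, $j$ with $i+j=n+1$, and split the sum $\sum_{\lambda=1}^{j}$ according to the three cases that arise in the definitions of $R_0(j;1,\ldots,i,\ldots,1)$ and $R_\lambda(j;1,\ldots,i,\ldots,1)$: namely $r \leq \lambda-1$, $\lambda \leq r \leq \lambda+i-1$, and $\lambda+i \leq r \leq n$. In the middle case, the labels are $R_0[r]=[\lambda]$ and $R_\lambda[r]=[r-\lambda+1]$, so by the defining formula for $\mu_{k,[s]}$ the term becomes
\begin{align*}
\mu_j\bigl(R(a_1),\ldots,R(a_{\lambda-1}),\ \mu_i(R(a_\lambda),\ldots,a_r,\ldots,R(a_{\lambda+i-1})),\ R(a_{\lambda+i}),\ldots,R(a_n)\bigr),
\end{align*}
which is precisely $\mu_j(b_1,\ldots,\mu_i(b_\lambda,\ldots,b_{\lambda+i-1}),\ldots,b_n)$. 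In the other two cases, $R_\lambda[r]=[1]+\cdots+[i]$, so by linearity the inner expression is $\sum_{s=1}^{i}\mu_i(R(a_\lambda),\ldots,a_{\lambda+s-1},\ldots,R(a_{\lambda+i-1}))$, and the outer label leaves position $a_r$ bare while wrapping the inserted inner term in $R$. Invoking the Rota-Baxter identity (\ref{rota-defn}) collapses $R\bigl(\sum_{s}\mu_i(R(a_\lambda),\ldots,a_{\lambda+s-1},\ldots,R(a_{\lambda+i-1}))\bigr)$ into $\mu_i(R(a_\lambda),\ldots,R(a_{\lambda+i-1}))$, which again equals $\mu_i(b_\lambda,\ldots,b_{\lambda+i-1})$ because $r$ lies outside the range $[\lambda,\lambda+i-1]$ in these two cases.

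Thus, after this rewriting, every term on the left of (\ref{dend-inf-iden}) equals the corresponding term in the $A_\infty$-identity for $(b_1,\ldots,b_n)$, with matching sign. Summing gives zero by the $A_\infty$-relation, completing the verification. The main obstacle is purely bookkeeping: keeping track of which single slot is left ``bare'' across the three cases and confirming that the Rota-Baxter identity is exactly what is needed to convert the two ``all-wrapped'' cases into the same $A_\infty$ form as the middle case. Once the combinatorial identity $a_r \leftrightarrow b_r = a_r$ is spotted, no further computation beyond the case split and a single application of (\ref{rota-defn}) per non-middle case is required.
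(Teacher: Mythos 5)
Your proposal is correct and follows essentially the same route as the paper's proof: the same three-way case split on the position of $r$ relative to the block $[\lambda,\lambda+i-1]$, with a single application of the Rota--Baxter identity in each of the two non-middle cases to match the $\mathrm{Dend}_\infty$ terms with the $A_\infty$ identity evaluated on $(R a_1,\ldots,a_r,\ldots,R a_n)$. Your explicit remark that the signs match because $R$ has degree zero is a small but welcome addition the paper leaves implicit.
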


\begin{proof}
Since $R$ is a Rota-Baxter operator on $(A, \mu_k)$, it follows from (\ref{rota-defn}) that
\begin{align*}
\mu_k (R(a_1), \ldots, R(a_k)) = R (\sum_{i=1}^k \mu_{k, [i]} (a_1, \ldots, a_k)).
\end{align*}
Moreover, the $A_\infty$ condition on $(A, \mu_k)$ implies that
\begin{align}\label{a-inf-identi}
\sum_{i+j = n+1}^{} \sum_{\lambda =1}^{j} \pm ~ \mu_{j} \big( a_1, \ldots,  a_{\lambda -1}, \mu_i ( a_{\lambda}, \ldots, a_{\lambda + i-1}),
 a_{\lambda + i}, \ldots, a_n   \big) = 0,
\end{align} 
for all $n \geq 1$ and $a_1, \ldots, a_n \in A$. In identity (\ref{a-inf-identi}), replace the tuple $(a_1, \ldots, a_n)$ of elements in $A$ by $(Ra_1, \ldots, a_r, \ldots, Ra_n)$, for some fixed $1 \leq r \leq n$.
For any fixed $i, j$ and $\lambda$, if $r \leq \lambda -1$, then the term inside the summation look
 \begin{align*}
& \mu_{j} \big( R  a_1, \ldots, a_r , \ldots  R a_{\lambda -1}, \mu_i ( Ra_{\lambda}, \ldots, R a_{\lambda + i-1}),
R a_{\lambda + i}, \ldots, R a_n   \big)\\
& = \mu_j ( R  a_1, \ldots, a_r , \ldots  R a_{\lambda -1} , R (\sum_{k=1}^i \mu_{i, [k]} (a_\lambda , \ldots, a_{\lambda +i -1}) ), Ra_{\lambda+i}, \ldots, Ra_n \big) \\
& = \mu_{j, [r]} (a_1, \ldots, a_{\lambda -1} , \sum_{k=1}^i \mu_{i, [k]} (a_\lambda , \ldots, a_{\lambda +i -1}) , a_{\lambda +i}, \ldots, a_n)\\
& = \mu_{j, R_0 (j; 1, \ldots, i, \ldots, 1)[r]} (a_1, \ldots, a_{\lambda -1} , \mu_{i, R_\lambda (j; 1, \ldots, i, \ldots, 1)[r]} (a_{\lambda}, \ldots, a_{\lambda+i-1}), a_{\lambda +i}, \ldots, a_n).
 \end{align*}
If $\lambda \leq r \leq \lambda +i -1$, we get
\begin{align*}
& \mu_j (Ra_1, \ldots, R a_{\lambda -1}, \mu_i (Ra_\lambda, \ldots, a_r, \ldots, R a_{\lambda -i +1}), R a_{\lambda + i}, \ldots, R a_n) \\
& = \mu_j (Ra_1, \ldots, R a_{\lambda -1}, \mu_{i, [r - (\lambda -1)]} (a_\lambda, \ldots, a_{\lambda + i-1}), R a_{\lambda + i}, \ldots, R a_n ) \\
& = \mu_{j, \lambda } (a_1, \ldots, a_{\lambda -1}, \mu_{i, [r - (\lambda -1)]} (a_\lambda, \ldots, a_{\lambda + i-1}),  a_{\lambda + i}, \ldots,  a_n ) \\
& =  \mu_{j, R_0 (j; 1, \ldots, i, \ldots, 1)[r]} (a_1, \ldots, a_{\lambda -1} , \mu_{i, R_\lambda (j; 1, \ldots, i, \ldots, 1)[r]} (a_{\lambda}, \ldots, a_{\lambda+i-1}), a_{\lambda +i}, \ldots, a_n).
 \end{align*}
 Similarly, if $\lambda + i \leq r \leq n$,
 \begin{align*}
& \mu_j (Ra_1, \ldots, R a_{\lambda -1}, \mu_i (Ra_{\lambda}, \ldots, R a_{\lambda + i -1 }), R a_{\lambda + i}, \ldots, a_r , \ldots,  Ra_n) \\
& =  \mu_{j, R_0 (j; 1, \ldots, i, \ldots, 1)[r]} (a_1, \ldots, a_{\lambda -1} , \mu_{i, R_\lambda (j; 1, \ldots, i, \ldots, 1)[r]} (a_{\lambda}, \ldots, a_{\lambda+i-1}), a_{\lambda +i}, \ldots, a_n).
 \end{align*}
 Therefore, we get the identities of a $\mathrm{Dend}_\infty$-algebra.
\end{proof}

\medskip


\section{$2$-term $\mathrm{Dend}_\infty$-algebras}
In this section we study $\mathrm{Dend}_\infty$-algebras whose underlying graded vector space is concentrated in degrees $0$ and $1$. We call them $2$-term $\mathrm{Dend}_\infty$-algebras. We put special emphasis on skeletal and strict $2$-term $\mathrm{Dend}_\infty$-algebras. The study of this section is analogous to the work of Baez and Crans for $L_\infty$-algebras \cite{baez-crans}. 

\begin{defn}(/{\bf Proposition.})\label{defn-2-term}
A $2$-term $\mathrm{Dend}_\infty$-algebra consists of the following data
\begin{itemize}
\item a chain complex $A_1 \xrightarrow{d} A_0,$
\item bilinear maps $\mu_2 : \mathbb{K} [C_2] \otimes (A_i \otimes A_j) \rightarrow A_{i+j},$
\item a trilinear maps $\mu_3 : \mathbb{K}[C_3] \otimes (A_0 \otimes A_0 \otimes A_0) \rightarrow A_1$
\end{itemize}
such that the following identities are hold:
\begin{itemize}
\item[(i)] $\mu_2 ([r]; m,n) = 0$,

\medskip

\item[(ii)] $d \mu_2 ([r]; a,m) = \mu_2 ([r]; a, dm)$,

\medskip

\item[(iii)] $d \mu_2 ([r]; m,a) = \mu_2 ([r]; dm, a)$,

\medskip

\item[(iv)] $\mu_2 ([r]; dm, n) = \mu_2 ([r]; m, dn)$,

\medskip

\item[(v)]$(\mu_2 \circ_1 \mu_2 - \mu_2 \circ_2 \mu_2 )([s]; a, b, c) =  d\mu_3 ([s];~a, b, c),$

\medskip

\item[(vi1)] $(\mu_2 \circ_1 \mu_2 - \mu_2 \circ_2 \mu_2 )([s]; a, b, m) =  \mu_3 ([s];~a, b, dm),$

\medskip

\item[(vi2)] $(\mu_2 \circ_1 \mu_2 - \mu_2 \circ_2 \mu_2 )([s]; a,  m, c) =  \mu_3 ([s];~a, dm, c),$

\medskip

\item[(vi3)]  $(\mu_2 \circ_1 \mu_2 - \mu_2 \circ_2 \mu_2 )([s]; m,  b, c) =  \mu_3 ([s];~ dm, b, c),$

\medskip

\item[(vii)] $(\mu_3 \circ_1 \mu_2 - \mu_3 \circ_2 \mu_2 + \mu_3 \circ_3 \mu_2) ([t]; a, b, c, e) =~ (\mu_2 \circ_1 \mu_3 + \mu_2 \circ_2 \mu_3) ([t]; a, b, c, e), $
	\end{itemize}
for all $a, b, c, e \in A_0; ~m, n \in A_1;~ [r] \in C_2;~ [s] \in C_3$ and $[t] \in C_4.$
\end{defn}

We denote a $2$-term $\mathrm{Dend}_\infty$-algebra by $(A_1 \xrightarrow{d} A_0, \mu_2, \mu_3).$ It follows from Remark \ref{split-rem} that a $2$-term $\mathrm{Dend}_\infty$-algebra can be thought as a splitting of a $2$-term $A_\infty$-algebra.

\medskip

\subsection{Skeletal algebras}

A $2$-term $\mathrm{Dend}_\infty$-algebra $(A_1 \xrightarrow{d} A_0, \mu_2, \mu_3)$ is called skeletal if $d = 0$.

In the next theorem, we show that skeletal algebras are closely related to the cohomology theory of dendriform algebras introduced in Section \ref{sec-2}.

\begin{thm}\label{skeletal-2}
There is a one-to-one correspondence between skeletal $\mathrm{Dend}_\infty$-algebras and tuples $(A, M, \theta_1, \theta_2, \sigma)$ consisting of a dendriform algebra $A$, a representation $(M, \theta_1, \theta_2)$ and a $3$-cocycle $\sigma : \mathbb{K}[C_3] \otimes A^{\otimes 3} \rightarrow M$ of the dendriform algebra $A$ with coefficients in $M$.
%
\end{thm}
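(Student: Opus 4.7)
The plan is to carefully unpack the definition of a skeletal $2$-term $\mathrm{Dend}_\infty$-algebra by setting $d=0$ in the identities $(i)$--$(vii)$ of Definition \ref{defn-2-term}, and then identify each surviving identity with one piece of structure on the tuple $(A,M,\theta_1,\theta_2,\sigma)$. First I would observe that when $d=0$, conditions $(ii),(iii),(iv)$ become trivial and $(vi1),(vi2),(vi3)$ reduce to $\mu_2\circ_1\mu_2-\mu_2\circ_2\mu_2=0$ evaluated on mixed triples containing one element of $A_1$. Condition $(v)$ likewise becomes $\mu_2\circ_1\mu_2=\mu_2\circ_2\mu_2$ on $A_0^{\otimes 3}$. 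Setting $A:=A_0$ and $\pi_A:=\mu_2|_{A_0}$, this is precisely the equation $\pi_A\circ_1\pi_A=\pi_A\circ_2\pi_A$, which by Remark \ref{dend-mul} is equivalent to $(A,\mu_2|_{A_0})$ being a dendriform algebra.

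Next, condition $(i)$ forces $\mu_2$ to vanish on $A_1\otimes A_1$, so the only remaining part of $\mu_2$ gives two families of maps $\theta_1([r];a,m):=\mu_2([r];a,m)$ and $\theta_2([r];m,a):=\mu_2([r];m,a)$ with $m\in A_1=:M$. I would then match the three instances of $(vi1),(vi2),(vi3)$ (with the element of $A_1$ placed in slot $3$, $2$, and $1$, respectively) with the three compactified representation identities of Remark \ref{remark-repn}; the combinatorial maps $R_0,R_i$ appearing on both sides are formally identical, so this is a direct comparison. This shows $(M,\theta_1,\theta_2)$ is a representation of $A$.

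The heart of the proof is identifying $(vii)$ with the $3$-cocycle condition $\delta_{\mathrm{dend}}(\sigma)=0$, where $\sigma:=\mu_3$. I would write out $\delta_{\mathrm{dend}}(\sigma)([t];a,b,c,e)$ using the explicit coboundary formula given in Section \ref{sec-2}: the first term $\theta_1(\ldots;a,\sigma(\ldots;b,c,e))$ is exactly $(\mu_2\circ_2\mu_3)([t];a,b,c,e)$; the last term $(-1)^{4}\theta_2(\ldots;\sigma(\ldots;a,b,c),e)$ is $(\mu_2\circ_1\mu_3)([t];a,b,c,e)$; and the middle sum $\sum_{i=1}^{3}(-1)^{i}\sigma(\ldots;\pi_A(\ldots;a_i,a_{i+1}),\ldots)$ reassembles, after the sign convention, as $-\mu_3\circ_1\mu_2+\mu_3\circ_2\mu_2-\mu_3\circ_3\mu_2$. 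Rearranging gives exactly the equation in $(vii)$. The main obstacle will be verifying that the combinatorial maps $R_0(\cdot),R_\lambda(\cdot)$ appearing in the operadic compositions $\mu_3\circ_i\mu_2$ and $\mu_2\circ_i\mu_3$ match, on each argument label $[t]\in C_4$, the corresponding $R_0,R_i$ that appear in the coboundary formula; this is a symbol-by-symbol check using the explicit definitions of $R_0,R_i$ from Section \ref{sec-2}.

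Finally, for the converse, given $(A,M,\theta_1,\theta_2,\sigma)$ I would build a skeletal $2$-term $\mathrm{Dend}_\infty$-algebra by setting $A_0=A$, $A_1=M$, $d=0$, declaring $\mu_2$ to be the dendriform product on $A_0^{\otimes 2}$, the actions $\theta_1,\theta_2$ on the mixed summands, and $0$ on $A_1\otimes A_1$, and putting $\mu_3=\sigma$. Each of the conditions $(i)$--$(vii)$ then reduces, by the same matching carried out above, to one of: a dendriform identity for $A$, a representation identity for $M$, or the $3$-cocycle condition for $\sigma$. The assignment $(A_1\xrightarrow{0}A_0,\mu_2,\mu_3)\mapsto(A_0,A_1,\theta_1,\theta_2,\mu_3)$ and this inverse construction are clearly mutually inverse, establishing the bijection.
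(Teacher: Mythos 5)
Your proposal is correct and follows essentially the same route as the paper's proof: reading off the dendriform structure on $A_0$ from (v), the representation on $A_1$ from (vi1)--(vi3), and identifying (vii) with $\delta_{\mathrm{dend}}(\mu_3)=0$, then reversing the construction. Your sign bookkeeping in matching the coboundary formula to condition (vii) is accurate; the paper simply states these identifications without writing them out.
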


\begin{proof}
Let $(A_1 \xrightarrow{0} A_0, \mu_2, \mu_3)$ be a skeletal $\mathrm{Dend}_\infty$-algebra. Then it follows from condition (v) in Definition \ref{defn-2-term} that $A_0$ is a dendriform algebra with the multiplication $\mu_2 : \mathbb{K}[C_2] \otimes (A_0)^{\otimes 2} \rightarrow A_0$.
 Moreover, conditions (vi1), (vi2) and (vi3) says that $A_1$ is a representation of $A_0$ with
\begin{align*}
\theta_1 =~& \mu_2 : \mathbb{K}[C_2] \otimes (A_0 \otimes A_1) \rightarrow A_1, \\
\theta_2 =~& \mu_2 : \mathbb{K}[C_2] \otimes (A_1 \otimes A_0) \rightarrow A_1.
\end{align*}
Finally, note that the condition (vii) is equivalent to
\begin{align*}
(\delta_{\mathrm{dend}} \mu_3 ) ([t]; a, b, c, e) = 0, ~~~ \mathrm{ for all }~~~~ [t] \in C_4.
\end{align*}
In other words, $\mu_3$ defines a $3$-cocycle of $A_0$ with coefficients in $A_1$.

Conversely, given a tuple $(A, M, \theta_1, \theta_2, \sigma)$ as in the statement, we define  $A_0 = A$ and $A_1 = M$. Moreover, we define
\begin{align*}
\mu_2([r]; a, b) =~& \pi_A ([r]; a, b),\\
\mu_2 ([r]; a, m) =~& \theta_1 ([r]; a, m) , \\
\mu_2 ([r]; m, a) =~& \theta_2 ([r]; m, a),\\
\mu_3 ([s]; a, b, c) =~& \sigma ([s]; a, b, c).
\end{align*}
It is easy to verify that with these definitions, $(A_1 \xrightarrow{0} A_0, \mu_2, \mu_3)$ is a skeletal $\mathrm{Dend}_\infty$-algebra.
\end{proof}

One can generalize this theorem by considering 
 $\mathrm{Dend}_\infty$-algebras whose underlying chain complex is concentrated only in degrees $0$ and $n-1$, and zero differential, namely
\begin{align}\label{n-skel}
 (A_{n-1} \xrightarrow{0} 0 \xrightarrow{0} \cdots \xrightarrow{0} 0 \xrightarrow{0} A_0).
\end{align}
For degree reasons, it is easy to see that the only non-trivial binary maps are $\mu_2 : \mathbb{K}[C_2] \otimes (A_0 \otimes A_{0}) \rightarrow A_{0}$, $\mu_2 : \mathbb{K}[C_2] \otimes (A_0 \otimes A_{n-1}) \rightarrow A_{n-1}$ and $\mu_2 : \mathbb{K}[C_2] \otimes (A_{n-1} \otimes A_{0}) \rightarrow A_{n-1}$. Among the other maps, the map 
$\mu_{n+1} : \mathbb{K}[C_{n+1}] \otimes (A_0)^{\otimes n+1} \rightarrow A_{n-1}$ is only non-trivial.

 It follows from the definition of $\mathrm{Dend}_\infty$-algebra that $A= A_0$ is a dendriform algebra with the multiplication $\mu_2$. Moreover, the maps $\mu_2 : \mathbb{K}[C_2] \otimes (A_0 \otimes A_{n-1}) \rightarrow A_{n-1}$ and $\mu_2 : \mathbb{K}[C_2] \otimes (A_{n-1} \otimes A_0) \rightarrow A_{n-1}$ defines a representation of $A_0$ on the vector space $M= A_{n-1}$.

Moreover, we have the following.

\begin{thm}\label{skeletal-n}
There is a one-to-one correspondence between $\mathrm{Dend}_\infty$-algebras whose underlying chain complex are of the form (\ref{n-skel}) and tuples $(A, M, \theta_1, \theta_2, \sigma)$ where $A$ is a dendriform algebra,  $(M, \theta_1, \theta_2)$ is a representation of it and $\sigma : \mathbb{K}[C_{n+1}] \otimes A^{\otimes (n+1)} \rightarrow M$ is a $(n+1)$-cocycle of $A$ with coefficients in $M$.
\end{thm}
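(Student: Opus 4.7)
The plan is to directly generalize the argument of Theorem \ref{skeletal-2}, with the main new ingredient being a degree count that isolates $\mu_2$ and $\mu_{n+1}$ as the only non-trivial structure maps, together with the identification of one specific $\mathrm{Dend}_\infty$-identity as the cocycle condition. First I would confirm the degree reduction: since $\mu_{k,[r]}$ has degree $k-2$ and the chain complex (\ref{n-skel}) is concentrated in degrees $0$ and $n-1$, for any $3 \leq k \leq n$ and any choice of homogeneous inputs, the output degree $k-2+\sum |a_i|$ cannot land in $\{0, n-1\}$, so $\mu_{k,[r]}=0$. A similar count shows $\mu_{n+1}$ is supported only on $A_0^{\otimes(n+1)}$ and $\mu_k=0$ for $k > n+1$. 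Hence the only potentially non-trivial operations are $\mu_2$ on the various bigradings and $\mu_{n+1}: \mathbb{K}[C_{n+1}]\otimes A_0^{\otimes(n+1)} \to A_{n-1}$.

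Next I would read off the dendriform algebra and representation structures from the $3$-argument $\mathrm{Dend}_\infty$-identities restricted to $A_0^{\otimes 3}$ and to the mixed types $A_0 \otimes A_0 \otimes A_{n-1}$, $A_0 \otimes A_{n-1} \otimes A_0$, $A_{n-1}\otimes A_0\otimes A_0$ (exactly as in the $n=2$ case); by the degree count, the right-hand ``homotopy'' terms involving $\mu_3$ vanish identically, so one gets strict dendriform and representation axioms. This recovers $\pi_A = \mu_2|_{A_0\otimes A_0}$, $\theta_1 = \mu_2|_{A_0\otimes A_{n-1}}$, $\theta_2 = \mu_2|_{A_{n-1}\otimes A_0}$, reproducing the paragraph of the paper preceding the theorem statement.

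The crucial step is to apply the defining identity (\ref{dend-inf-iden}) with $n+2$ arguments, all chosen in $A_0$, and an arbitrary label $[r] \in C_{n+2}$. Of the pairs $(i,j)$ with $i+j = n+3$, only $(i,j)=(2,n+1)$ and $(i,j)=(n+1,2)$ give non-vanishing terms, since every other choice would require a non-trivial $\mu_k$ with $3 \leq k \leq n$. The $(i,j)=(n+1,2)$ terms produce the outer composites $\mu_{2,R_0(2;1,n+1)[r]}\bigl(a_1, \sigma(R_2(2;1,n+1)[r]; a_2,\ldots,a_{n+2})\bigr)$ and $\mu_{2,R_0(2;n+1,1)[r]}\bigl(\sigma(\cdots), a_{n+2}\bigr)$, which by definition of $\theta_1,\theta_2$ are exactly the first and last terms of $(\delta_{\mathrm{dend}}\sigma)([r]; a_1,\ldots,a_{n+2})$. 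The $(i,j)=(2,n+1)$ terms produce the internal sum $\sum_{i=1}^{n+1}(-1)^i \sigma\bigl(\cdots; \ldots, \pi_A(\cdots;a_i,a_{i+1}),\ldots\bigr)$, using that the combinatorial maps $R_0, R_\lambda$ from the $\mathrm{Dend}_\infty$-identity coincide with those appearing in the coboundary formula for $\delta_{\mathrm{dend}}$. Setting $\sigma := \mu_{n+1}$, the vanishing of the $\mathrm{Dend}_\infty$-identity becomes precisely $(\delta_{\mathrm{dend}}\sigma)([r]; a_1,\ldots,a_{n+2}) = 0$, so $\sigma$ is an $(n+1)$-cocycle.

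Conversely, given $(A,M,\theta_1,\theta_2,\sigma)$, one sets $A_0=A$, $A_{n-1}=M$, defines $\mu_2$ from $\pi_A,\theta_1,\theta_2$, sets $\mu_{n+1}=\sigma$, and takes all other $\mu_{k,[r]}$ to be zero; the above chain of equivalences, read backwards, shows all $\mathrm{Dend}_\infty$-identities hold (other identities vanish trivially on degree grounds). The main obstacle is purely bookkeeping: verifying that the combinatorial labels $R_0(j;1,\ldots,i,\ldots,1)[r]$ and $R_\lambda(j;1,\ldots,i,\ldots,1)[r]$ produced by the $\mathrm{Dend}_\infty$-identity in the two distinguished arity pairs $(2,n+1)$ and $(n+1,2)$ match, term-by-term and with the correct sign $(-1)^{\lambda(i+1)}$ reducing to $(-1)^{i}$ after reindexing, the ones appearing in the definition of $\delta_{\mathrm{dend}}$. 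Once that identification is made, the theorem reduces to the already-verified pattern of Theorem \ref{skeletal-2}.
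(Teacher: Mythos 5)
Your proposal is correct and follows essentially the same route as the paper: the degree count isolating $\mu_2$ and $\mu_{n+1}$ as the only non-trivial operations, the identification of the dendriform and representation structures from the three-argument identities, and the observation that in the $(n+2)$-argument identity only the arity pairs $(i,j)=(2,n+1)$ and $(n+1,2)$ survive, yielding exactly $\delta_{\mathrm{dend}}(\mu_{n+1})=0$. The only (harmless) imprecision is attributing the vanishing of the pairs $(1,n+2)$ and $(n+2,1)$ to ``a non-trivial $\mu_k$ with $3\leq k\leq n$''; those vanish instead because $\mu_1=d=0$ and $\mu_{n+2}=0$.
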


\begin{proof}
Let $( A_{n-1} \xrightarrow{0} 0 \xrightarrow{0} \cdots \xrightarrow{ 0} 0 \xrightarrow{0} A_0 , \mu_2, \mu_{n+1})$ be a such $\mathrm{Dend}_\infty$-algebra.
	Note that the condition (\ref{dend-inf-iden}) in the definition of a $\mathrm{Dend}_\infty$-algebra implies that
	\begin{align*}
	\sum_{i+j = n+3}^{} \sum_{\lambda =1}^{j} (-1)^{\lambda (i+1)} ~ \mu_{j, R_0 (j;1, \ldots, i, \ldots, 1)[r]} \big(  a_1, \ldots, \mu_{i, R_0 (j;1, \ldots, i, \ldots, 1)[r]} ( a_{\lambda}, \ldots, a_{\lambda + i-1}), \ldots,  a_{n+2}   \big) = 0,
	\end{align*}
	for all $[r] \in C_{n+2}$ and $a_1, \ldots, a_{n+2} \in A_0$. 
Since the non-zero terms in the above summation occurs for $(i=2,~ j = n+1)$ and $(i = n+1,~ j = 2)$, we get
	\begin{align*}
	&\sum_{\lambda = 1}^{n+1}  (-1)^{3 \lambda} ~\mu_{n+1, R_0 (n+1; 1, \ldots, 2, \ldots, 1)[r]}  \big( a_1, \ldots, a_{\lambda -1} , \mu_{2, R_\lambda (n+1; 1, \ldots, 2, \ldots, 1)[r]} (a_\lambda , a_{\lambda +1} ), a_{\lambda +2}, \ldots, a_{n+2}   \big) \\
	&+ (-1)^{n+2} \mu_{2, R_0 (2; n+1, 1)[r]}  \big(    \mu_{n+1, R_1 (2; n+1, 1)[r]} (a_1, \ldots, a_{n+1}),~ a_{n+2} \big ) \\
	&+ \mu_{2, R_0 (2;1, n+1)[r]} \big(  a_1, \mu_{n+1, R_2 (2;1, n+1)[r]} (a_2, \ldots, a_{n+2})   \big) = 0,
	\end{align*}
	for all $[r] \in C_{n+2}$ and $a_1, \ldots, a_{n+2} \in A_0$.
	This is equivalent to the fact that $\mu_{n+1} : \mathbb{K}[C_{n+1}] \otimes (A_0)^{\otimes n+1} \rightarrow A_{n-1}$ defines a $(n+1)$-cocycle of the dendriform algebra $A_0$ with coefficients in the representation $M=A_{n-1}$.
	
	The converse part is similar to Theorem \ref{skeletal-2}.
\end{proof}

\medskip

\subsection{Strict algebras}

A $2$-term $\mathrm{Dend}_\infty$-algebra $(A_1 \xrightarrow{d} A_0, \mu_2, \mu_3)$ is called strict if $\mu_3 = 0$.

\begin{exam}\label{exm-str1}
Let $A$ be a dendriform algebra with the corresponding multiplication $\pi_A$. Then $(A \xrightarrow{\text{id}} A, \mu_2 = \pi_A, \mu_3 = 0)$ is a strict $\text{Dend}_\infty$-algebra.
\end{exam}

\begin{exam}\label{exm-str2}
Let $A$ be a dendriform algebra and $N \xrightarrow{f} M$ be a morphism between $A$-representations. Then the chain complex $N \xrightarrow{f} A \oplus M$ inherits a strict $\text{Dend}_\infty$-algebra given by
\begin{align*}
\mu_2 ([r]; (a, m_1), (b, m_2)) =~& (\pi_A ([r]; a, b),~ \theta_1^M ([r]; a, m_2) + \theta_2^M ([r]; m_1, b)),\\
\mu_2 ([r]; (a, m_1), n) =~& \theta_1^N ([r]; a, n),\\
\mu_2 ([r]; n, (a, m_1)) =~& \theta_2^N ([r]; n, a),\\
\mu_3 =~& 0,
\end{align*}
for $(a, m_1), (b, m_2) \in A \oplus M$ and $n \in N$. This is called the semi-direct product of $A$ and $N \xrightarrow{f} M$. When $N=0$, we get the usual semi-direct product (cf. Proposition \ref{prop-semi}). 
\end{exam}

In the next, we show that strict algebras correspond to crossed module of dendriform algebras in the following sense.

\begin{defn}\label{defn-crossed-mod}
A crossed module of dendriform algebras consist of a quadruple $(A, B, dt, \theta_1, \theta_2)$ where $A$, $B$ are dendriform algebras, $dt : A \rightarrow B$ is a morphism of dendriform algebras and 
\begin{align*}
\theta_1 : \mathbb{K}[C_2] \otimes (B \otimes A) \rightarrow A \\
\theta_2 : \mathbb{K}[C_2] \otimes (A \otimes B) \rightarrow A
\end{align*}
defines a representation of $B$ on $A$, such that for $m, n \in A$, $b \in B$, $[r] \in C_2$ and $[s] \in C_3$,
\begin{align*}
dt ( \theta_1 ([r]; b, m)) =~& \pi_B ([r]; b, dt(m)), \\
dt ( \theta_2 ([r]; m, b)) =~& \pi_B ([r]; dt(m), b), \\
\theta_1 ([r]; dt(m), n) =~& \pi_A ([r]; m, n),\\
\theta_2 ([r]; m, dt(n)) =~& \pi_A ([r]; m, n),\\
(\theta_1 \circ_2 \pi_A ) ([s]; b, m, n) =~& (\pi_A \circ_1 \theta_1) ([s]; b, m,n),\\
(\theta_2 \circ_1 \pi_A) ([s]; m, n, b) =~& (\pi_A \circ_2 \theta_2) ([s]; m, n, b).
\end{align*}
\end{defn}

In a similar manner, a crossed module of dendriform algebras gives rise to a crossed module of associative algebras.

\begin{thm}\label{strict-cross}
There is a one-to-one correspondence between strict $\mathrm{Dend}_\infty$-algebras and crossed module of dendriform algebras.
\end{thm}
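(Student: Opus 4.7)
The plan is to match the axioms of a strict $\mathrm{Dend}_\infty$-algebra (Definition \ref{defn-2-term} with $\mu_3 = 0$) with those of a crossed module of dendriform algebras (Definition \ref{defn-crossed-mod}) term by term. Starting from a strict $\mathrm{Dend}_\infty$-algebra $(A_1 \xrightarrow{d} A_0, \mu_2, 0)$, I would set $B := A_0$, $A := A_1$, and $dt := d$. With $\mu_3 = 0$, condition (v) reduces to $\mu_2 \circ_1 \mu_2 = \mu_2 \circ_2 \mu_2$ on $A_0^{\otimes 3}$, so $\pi_B := \mu_2|_{\mathbb{K}[C_2]\otimes A_0^{\otimes 2}}$ is a dendriform multiplication on $B$; conditions (vi1)--(vi3) specialize to the three identities of Remark \ref{remark-repn}, so $\theta_1 := \mu_2|_{\mathbb{K}[C_2]\otimes (A_0\otimes A_1)}$ and $\theta_2 := \mu_2|_{\mathbb{K}[C_2]\otimes (A_1\otimes A_0)}$ endow $A$ with the structure of a $B$-representation. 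I would then define a multiplication on $A$ by
\[
\pi_A([r]; m, n) := \theta_1([r]; dm, n) = \theta_2([r]; m, dn),
\]
whose well-definedness is precisely condition (iv). Conditions (ii) and (iii) immediately give $d\pi_A([r]; m, n) = \pi_B([r]; dm, dn)$, so $dt$ intertwines $\pi_A$ and $\pi_B$.

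\textbf{Verifying the remaining axioms.} To see that $\pi_A$ itself satisfies the dendriform identities, I would expand the outer $\pi_A$ in $\pi_A \circ_1 \pi_A$ as $\theta_1(d\cdot,\cdot)$ and rewrite the inner $\pi_A$ via $d\pi_A = \pi_B\circ(d\otimes d)$; the identity $\pi_A\circ_1\pi_A = \pi_A\circ_2\pi_A$ then collapses onto the first representation axiom of Remark \ref{remark-repn} evaluated at $(dm, dn, p)$. The two mixed crossed module identities
\[
(\theta_1 \circ_2 \pi_A)([s]; b, m, n) = (\pi_A \circ_1 \theta_1)([s]; b, m, n), \quad (\theta_2 \circ_1 \pi_A)([s]; m, n, b) = (\pi_A \circ_2 \theta_2)([s]; m, n, b)
\]
are extracted similarly from the second and third representation axioms after pushing $\pi_A$ through $dt$ on the appropriate side. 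Condition (i) is compatible with declaring $\mu_2$ to vanish on $A_1 \otimes A_1$, and (vii) is automatic. This yields the desired crossed module $(A, B, dt, \theta_1, \theta_2)$.

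\textbf{Inverse construction and the main obstacle.} Conversely, from a crossed module $(A, B, dt, \theta_1, \theta_2)$ I would set $A_0 := B$, $A_1 := A$, $d := dt$, assemble $\mu_2$ from $\pi_B, \theta_1, \theta_2$ on the three mixed-degree summands, set $\mu_2|_{A_1\otimes A_1} := 0$, and take $\mu_3 := 0$; each axiom (i)--(vii) then reverses one step of the forward analysis. The two procedures are mutually inverse by inspection. I expect the only real obstacle to be bookkeeping in the combinatorial maps $R_0, R_\lambda$ that govern how the labels $[r] \in C_n$ split under partial compositions: one must match the four position-cases for an $A_1$-argument in $\mu_2 \circ \mu_2$ against the two presentations of $\pi_A$ via $dt$ and the three representation axioms. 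Once the indexing is aligned, every implication becomes a one-line verification.
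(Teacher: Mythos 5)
Your proposal is correct and follows essentially the same route as the paper: identify $B=A_0$, $A=A_1$, $dt=d$, define $\pi_A([r];m,n)=\mu_2([r];dm,n)=\mu_2([r];m,dn)$ (well-defined by (iv)), and derive the dendriform identity on $A$ and the crossed-module compatibilities by pushing arguments through $d$ into the identities (ii)--(iii) and (vi1)--(vi3), with the converse assembling $\mu_2$ from $\pi_B,\theta_1,\theta_2$ and setting $\mu_2|_{A_1\otimes A_1}=0$, $\mu_3=0$. The only difference is that you spell out slightly more of the verification of the mixed identities in Definition \ref{defn-crossed-mod}, which the paper leaves as ``easy to verify.''
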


\begin{proof}
	Let $(A_1 \xrightarrow{d} A_0, \mu_2 , \mu_3 = 0)$ be a strict  $\mathrm{Dend}_\infty$-algebra. Take $B = A_0$ and consider the map $\pi_B = \mu_2 : \mathbb{K}[C_2] \otimes (A_0 \otimes A_0) \rightarrow A_0$. It follows from the property (v) of Definition \ref{defn-2-term} that $B$ is a dendriform algebra. Take $A = A_1$ and define $\pi_A : \mathbb{K}[C_2] \otimes (A_1 \otimes A_1) \rightarrow A_1$ by $\pi_A ([r]; m,n) := \mu_2 ([r]; dm, n) = \mu_2 ([r]; m, dn)$, for $m, n \in A_1$. Then
	\begin{align*}
	& (\pi_A \circ_1 \pi_A - \pi_A \circ_2 \pi_A )([s]; ~m,n, p)\\
	& = \pi_A \big( R_0 (2;2,1)[s];~ \pi_A (R_1 (2;2,1)[s]; m, n), p \big) - \pi_A \big( R_0 (2;1,2)[s]; ~m,~ \pi_A (R_2 (2;1,2)[s]; n, p) \big) \\
	& = \mu_2 \big( R_0 (2;2,1)[s];~ \mu_2 (R_1 (2;2,1)[s]; dm, dn), p \big) - \mu_2 \big( R_0 (2;1,2)[s]; ~dm,~ \mu_2 (R_2 (2;1,2)[s]; dn, p) \big) \\
	& = 0 \quad ( \text{by (vi1)} ).
	\end{align*}
	This shows that $A$ is also a  dendriform algebra.
	Next, take $dt = d : A_1 \rightarrow A_0$. Then
	\begin{center}
	$d ( \pi_A ([r]; m,n) ) = d ( \mu_2 ([r]; dm,n)) = \mu_2 ([r]; dm,dn) = \pi_B ([r]; dm, dn), ~~\mathrm{ for }~~~ m, n \in A_1 = A.$
	\end{center}
	Hence, $dt$ is a morphism of dendriform algebras. Finally, we define
	\begin{align*}
	\theta_1 : \mathbb{K}[C_2] \otimes (B \otimes A) \rightarrow A, ~ ([r]; b, m) & \mapsto \mu_2 ([r]; b, m), \\
	\theta_2 : \mathbb{K}[C_2] \otimes (A \otimes B) \rightarrow A, ~ ([r]; m, b) & \mapsto \mu_2 ([r]; m , b),  ~~~~~\mathrm{ for }~~~~ b \in B, m \in A.
	\end{align*}
	By using (vi1), (vi2) and (vi3), it is easy to see that $\theta_1, \theta_2$ defines a representation on $A$. Moreover, one can easily verify that the conditions of Definition \ref{defn-crossed-mod} hold.

	Therefore, $(A, B, dt, \theta_1, \theta_2)$ is a crossed module of dendriform algebras.
	
\medskip
	
Conversely, given a crossed module of dendriform algebras $(A, B, dt, \theta_1, \theta_2)$, one can construct a strict  $\mathrm{Dend}_\infty$-algebra as follows. Take $A_1 = A$, $A_0 = B$ and define $d:= dt.$ The structure maps $\mu_2 : \mathbb{K}[C_2] \otimes A_i \otimes A_j \rightarrow A_{i+j}$ are defined by
	\begin{center}
	$\mu_2 ([r]; b, b') = \pi_B ([r];  b, b'), \quad \mu_2 ([r]; b, m) = \phi ([r]; b, m), \quad \mu_2 ([r]; m, b) = \phi ([r];  m, b)$
	\end{center}
	\begin{center}
	and $\mu_2 ([r]; m,n) = 0,$
	\end{center}
	for $b, b' \in A_0 = B,$ and $m, n \in A_1 = A$. It is easy to verify that these structures give rise to a strict $\mathrm{Dend}_\infty$-algebra structure $(A_1 \xrightarrow{d} A_0, \mu_2, \mu_3 = 0)$.
\end{proof}

Note that the crossed module corresponding to the Example \ref{exm-str1} is given by $(A, A, \text{id}, \pi_A, \pi_A).$

\medskip

\section{Appendix}
In this appendix, we recall some basics on operad and $A_\infty$-algebras which are essential for the main contents of the paper. For more details, see \cite{gers-voro, loday-val, stas}.\\

\noindent {\bf Operad with multiplication.}

\begin{defn}
A non-symmetric operad (non-$\sum$ operad) in the category of vector spaces is given by a collection of vector spaces $\mathcal{O} = \{ \mathcal{O}(k)|~ k \geq 1 \}$ together with partial compositions
\begin{align*}
\circ_i : \mathcal{O}(m) \otimes \mathcal{O}(n) \rightarrow \mathcal{O}(m+n-1), ~~~~ 1 \leq i \leq m
\end{align*}
satisfying the following
\begin{align*}
(f \circ_i g) \circ_{i+j-1} h = f \circ_i (g \circ_j h), \quad &\mbox{~~~ for } 1 \leq i \leq m, ~1 \leq j \leq n,\\
(f \circ_i g) \circ_{j+n-1} h = (f \circ_j h) \circ_i g, \quad  & \mbox{~~~ for } 1 \leq i < j \leq m,
\end{align*}
and there is an (identity) element $\mathrm{id} \in \mathcal{O}(1)$ such that 
$ f \circ_i \mathrm{id} = f =\mathrm{id}\circ_1 f,$ for all $f \in \mathcal{O}(m)$ and $1 \leq i \leq m$.
\end{defn}

A toy example of an operad is given by the endomorphism operad $\text{End}_A$ associated to a vector space $A$. More precisely, $\text{End}_A (k) = \text{Hom}_\mathbb{K} (A^{\otimes k}, A)$, for $k \geq 1$, and partial compositions are precisely given by the Gerstenhaber's $\circ_i$ product defined in \cite{gers}.

Using partial compositions in an operad, one can define a circle product $\circ : \mathcal{O}(m) \otimes \mathcal{O}(n) \rightarrow \mathcal{O}(m+n-1)$ by
\begin{align*}
f \circ g = \sum_{i=1}^m ~(-1)^{(i-1)(n-1)}~ f \circ_i g, ~~~ f \in \mathcal{O}(m), g \in \mathcal{O}(n).
\end{align*}
The product $\circ$ is not associative, however, they satisfy certain pre-Lie identity:
\begin{align}\label{pre-lie-iden}
(f \circ g) \circ h - f \circ (g \circ h) = (-1)^{(n-1)(p-1)} ((f \circ h) \circ g - f \circ (h \circ g)),
\end{align}
for $f \in \mathcal{O}(m), ~g \in \mathcal{O}(n)$ and $h \in \mathcal{O}(p).$
Therefore, there is a degree $-1$ graded Lie bracket on $\oplus_{k \geq 1} \mathcal{O}(k)$ by
\begin{align}\label{lie-brckt}
[f,g] := f \circ g - (-1)^{(m-1)(n-1)} g \circ f.
\end{align}

\begin{defn}
Let $(\mathcal{O}, \circ_i, \mathrm{id})$ be an operad. A multiplication on it is given by an element $\pi \in \mathcal{O}(2)$ such that $\pi \circ \pi = 0$, or, equivalently,
\begin{align*}
\pi \circ_1 \pi = \pi \circ_2 \pi.
\end{align*}
\end{defn}

A multiplication $\pi$ defines a associative product $\oplus_{k \geq 1} \mathcal{O}(k)$ by $f \cdot g = \pm~ (\pi \circ_2 g) \circ_1 f$ and a degree $+1$ differential $d_\pi : \mathcal{O}(k) \rightarrow \mathcal{O}(k+1)$ by $d_\pi f = \pi \circ f - (-1)^{k-1} f \circ \pi$. Then it is shown in \cite{gers-voro} that the bracket $[~, ~]$ and the product $\cdot$ induces a Gerstenhaber algebra structure on the graded cohomology space $H^\bullet (\mathcal{O}, d_\pi).$\\

\noindent {\bf $A_\infty$-algebras.}

\begin{defn}
An $A_\infty$-algebra is a graded vector space $A = \oplus A_i$ together with a collection of multilinear maps $\{ \mu_k : A^{\otimes k} \rightarrow A |~ 1 \leq k < \infty \}$ with $\text{deg}(\mu_k) = k-2$, satisfying the following identities:
\begin{align*}
\sum_{i+j = n+1} \sum_{\lambda = 1}^j (-1)^{\lambda (i+1) + i (|a_1| + \cdots + |a_{\lambda -1 }|)}~& \mu_j \big( a_1, \ldots, a_{\lambda -1}, \mu_i (a_\lambda, \ldots, a_{\lambda + i -1}), a_{\lambda +i}, \ldots, a_n \big) = 0,
\end{align*}
for any  $a_1, \ldots, a_n \in A.$
\end{defn}

\medskip

\medskip

\noindent {\bf Acknowledgement.} The research is supported by the fellowship of Indian Institute of Technology, Kanpur (India). The author would like to thank the Institute for support.



\end{document}